\newtheorem{thm}{Theorem}[section]
\newtheorem{lemma}[thm]{Lemma}
\newtheorem{prop}[thm]{Proposition}
\newtheorem{exa}[thm]{Example}
\newtheorem{defin}[thm]{Definition}
\theoremstyle{definition}
\def\zed{{\mathbb Z}}
\newtheoremstyle{cases}
  {12pt plus 6 pt}
  {2pt}
  {\bfseries}   
  {}
  {\bfseries}
  {.}
  {.5em}
  {}
\theoremstyle{cases}
\newtheorem{case}{Case}
\numberwithin{subcase}{section}
\numberwithin{equation}{section}
\def\sfrac#1#2{\kern.1em\raise.5ex\hbox{$#1$}
    \kern-.1em/\kern-.05em\lower.25ex\hbox{$#2$}}
\def\A{{\mathcal A}}
\def\T{{\mathcal T}}
\def\wT{\widetilde{\T}}
\def\zed{{\mathbb Z}}
\def\wA{\widetilde{\A}}
\def\fl{\mathfrak{\lambda}}
\def\zed{{\mathbb Z}}
\def\sfrac#1#2{\kern.1em\raise.5ex\hbox{$#1$}
        \kern-.1em/\kern-.05em\lower.25ex\hbox{$#2$}}
\def\T{{\mathcal T}}
\def\zed{{\mathbb Z}}
 \def\g{{\gamma}}
 \def\c{{\mathbb C}}
 \def\complex{{\mathbb C}}
\def\naturals{{\mathbb N}} 
\def\integers{{\mathbb Z}}
\def\z{{\mathbb Z}}
 \def\2{{\mathbb Z_2}}
 \def\t{{\tau}}
 \def\sl2{{SL(2,\mathbb C)}}
 \def\sm{{{\mbox{\small M}}}}
  \def\sl{{{\mbox{\small L}}}}
\def\sc{{{\mbox{\tiny C}}}}
\def\sk{{{\mbox{\tiny K}}}}
\def\su{{{\mbox{\tiny U}}}}
\def\se{{{\mbox{\tiny E}}}}
\begin{document}

\title{The AJ-conjecture and cabled knots over the figure eight knot \footnotetext{2010 Mathematics Subject Classification: Primary 57M25\\Keywords: colored Jones polynomial, A-polynomial, AJ-conjecture}}

\author{Dennis Ruppe }
\address{Department of Mathematics, University at Buffalo, Buffalo, NY, 14214-3093, USA.}
\email{dennisru@buffalo.edu}

\begin{abstract}
We show that most cabled knots over the figure eight knot in $S^3$ satisfy the $AJ$-conjecture,
in particular, any $(r,s)$-cabled knot over the figure eight knot satisfies
the $AJ$-conjecture if $r$ is not a number between
$-4s$ and $4s$.  
\end{abstract}

\maketitle

\section{Introduction}
For a knot $K$ in $S^3$, let $J_{\sk, n}(t)$ denote the \textit{$n$-colored Jones polynomial} of $K$
with the zero framing, normalized so that for the unknot $U$,
\[J_{\su,n}(t) = \frac{t^{2n}-t^{-2n}}{t^2-t^{-2}}.\]
The colored Jones polynomial is a powerful quantum invariant that has many surprising connections to classical invariants. For example, the Melvin-Morton-Rozansky conjecture, proved in \cite{BNG}, states that the Alexander-Conway polynomial of a knot can be recovered from a certain limit of the colored Jones polynomials. The volume conjecture, raised in \cite{K} and put in terms of the colored Jones polynomial in \cite{MM}, is an important and still open conjecture relating the polynomial to the hyperbolic volume of the knot complement. In this paper, we investigate the $AJ$-conjecture, raised in \cite{G}, which relates recurrence relations of the sequence of colored Jones polynomials to the $A$-polynomial.

For every knot $K$, it was proven in \cite{GaLe} that the sequence of colored Jones polynomials $J_{\sk,n}(t)$ satisfies a nontrivial recurrence relation. By defining $J_{\sk,-n}(t) := -J_{\sk,n}(t)$ for $n \in \integers$, we can treat $J_{\sk,n}(t)$ as
 a discrete function  $$J_{\sk,-}(t): \z \to \z[t^{\pm 1}].$$
 The quantum torus
\[ \T = \c[t^{\pm 1}] \left<\sl^{\pm 1}, \sm^{\pm 1} \right> / (\sl \sm-t^2\sm \sl)\]
acts on   the set of discrete functions $f: \z \to \c[t^{\pm 1}]$ by
\[(\sm f)(n) := t^{2n}f(n), \quad  (\sl f)(n) := f(n+1).\]
A linear homogeneous recurrence relation $\sum_{i=0}^d P_i(t,\sm)J_{\sk,n+i}(t) = 0$ then corresponds to a polynomial $P(t,\sm,\sl) \in \T$ that is an annihilator of $J_{\sk,n}(t)$ and vice versa.
The set of all such annihilators
  $${\mathcal A}_\sk := \{P(t,\sm,\sl) \in \T \mid P(t,\sm,\sl) J_{\sk,n}(t) = 0\},$$
which is a left ideal of $\T$, is called the \textit{recurrence ideal} of $K$. For every knot $K$, we know that ${\mathcal A}_\sk$ is nontrivial.

The ring $\T$ can be extended to a principal left ideal domain $\wT$ by including inverses of polynomials in $t$ and $\sm$. In $\wT$, we have a product defined by
\[f(t,\sm) \sl^a \cdot g(t,\sm) \sl^b = f(t,\sm) g(t,t^{2a}\sm)\sl^{a+b}\]
for any rational functions $f(t,\sm)$, $g(t,\sm)$ in $\complex(t,\sm)$. The left ideal $\wA_\sk=\wT {\mathcal A_\sk}$ is then generated by some nonzero polynomial in $\wT$, and in particular, this generator can be chosen to be in ${\mathcal A}_\sk$ and be of the form
\[\alpha_\sk(t,\sm,\sl) = \sum_{i=0}^d P_i \sl^i,\]
with $d$ minimal and with  $P_1,...,P_d \in \z[t,\sm]$ being relatively prime in $\z[t,\sm]$. This polynomial $\alpha_\sk$ is uniquely determined up to a sign and is called the (\textit{normalized}) \textit{recurrence polynomial} of $K$.

The $A$-polynomial was introduced in \cite{CCGLS}. For a knot $K$ in $S^3$,
its $A$-polynomial $A_\sk(\sm,\sl)\in \z[\sm,\sl]$ is a two variable polynomial
with no repeated factors and with relatively prime integer coefficients,
which is uniquely associated to $K$ up to a sign. Note that $A_\sk(\sm,\sl)$ always contains the factor $\sl-1$.

The $AJ$-conjecture states that for every knot $K$, its recurrence polynomial $\alpha_\sk(t,\sm,\sl)$  evaluated at $t = -1$ is equal to the $A$-polynomial of $K$, up to a factor of a polynomial in $\sm$.
So far, it has been shown that torus knots, some classes of $2$-bridge knots and pretzel knots, and most cabled knots over torus knots satisfy the conjecture; see  \cite{G}, \cite {Ta}, \cite{Hikami},
\cite{Le}, \cite{LT}, \cite{Tran}, \cite{RZ}.

In our previous work \cite{RZ}, we used explicit formulas to verify the $AJ$-conjecture for most cabled knots over torus knots. For the figure eight knot, its colored Jones polynomials and $A$-polynomial are more complicated, and the $A$-polynomials of the cabled knots over the figure eight knot are given in terms of the resultant of two polynomials, requiring a more theoretical connection. 

\begin{thm}\label{main result}
The $AJ$-conjecture holds for each $(r,s)$-cabled knot $C$ over the figure eight knot $E$ when $r > 4s$ or $r < -4s$.
\end{thm}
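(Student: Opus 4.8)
The plan is to follow the strategy of the author's earlier treatment of cables over torus knots in \cite{RZ}, adapting it to the hyperbolic companion $E$. The first ingredient is the cabling formula for the colored Jones polynomial: for the $(r,s)$-cable $C$ there is an overall framing factor $t^{\phi(n)}$, with $\phi$ quadratic in $n$, together with a finite sum
\begin{equation*}
J_{\sc,n}(t) = t^{\phi(n)}\sum_{i} c_i(t)\, J_{\se,\, sn + a_i}(t),
\end{equation*}
where the $a_i$ run over an arithmetic progression whose length depends on $s$. This expresses $J_{\sc,n}$ purely in terms of the companion's colored Jones polynomials at colors spaced by $s$, so that a shift of $n$ by $1$ for the cable corresponds to a shift of the companion color by $s$.

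Next I would invoke the known recurrence polynomial $\alpha_\se(t,\sm,\sl)$ of the figure eight knot from \cite{G}. Since the cabling formula samples the companion's colors in steps of $s$, the cable shift operator $\sl$ acts on each summand like the $s$-th power of the companion color shift, and the cable operator $\sm$ relates to the companion meridian operator by $\sm^{s}$ up to a unit $t^{2a_i}$. Eliminating the single-step companion shift in favor of the $s$-step one — the standard decimation of a holonomic sequence — and then clearing the finitely many sampled terms by means of $\alpha_\se$, I would produce an explicit operator $\beta(t,\sm,\sl)\in\T$ with $\beta\, J_{\sc,n}=0$. This yields an upper bound on the $\sl$-degree of the recurrence polynomial $\alpha_\sc$.

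The central step is to show that $\beta$, after normalization, is exactly $\alpha_\sc$; that is, that no recurrence of strictly smaller $\sl$-degree annihilates $J_{\sc,n}$. I would prove this by a linear-independence argument for the sampled, shifted colored Jones polynomials over the field $\complex(t,\sm)$, tracking leading terms in a suitable expansion. This is where the hypothesis $r>4s$ or $r<-4s$ is essential: the figure eight knot has boundary slopes $\{0,\pm 4\}$, and requiring the cabling slope $r/s$ to lie strictly outside $[-4,4]$ prevents it from resonating with the companion's boundary data, which is precisely the degeneration that would permit an unexpected lower-order recurrence, or an extra common factor, to appear. I expect this minimality and the attendant degree bookkeeping to be the main obstacle.

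Finally I would compute $A_\sc(\sm,\sl)$ from $A_\se$. Using the cable relation between the meridian--longitude coordinates of $C$ and those of $E$ — with $\sm$ governing the companion meridian and the companion longitude recovered from $\sl$ and $\sm$ through the framing — the A-polynomial of the cable is obtained as a resultant eliminating the companion's meridian eigenvalue, the exponents being determined by $(r,s)$, together with the factor contributed by the cabling annulus. Because the figure eight is already known to satisfy the $AJ$-conjecture, $\alpha_\se(-1,\sm,\sl)\doteq A_\se(\sm,\sl)$ up to a factor in $\sm$; specializing the construction of $\beta$ at $t=-1$ then matches the resultant computation of $A_\sc$, again up to powers of $\sm$. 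The inequality $|r|>4s$ guarantees that the $\sl$-degrees on the two sides coincide and that no spurious factors are introduced in the elimination, completing the identification $\alpha_\sc(-1,\sm,\sl)\doteq A_\sc(\sm,\sl)$ and hence the theorem.
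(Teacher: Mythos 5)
Your outline reproduces the paper's three-step architecture (decimate the figure-eight recurrence through the cabling formula to get an annihilator of $J_{\sc,n}$; prove minimality using $|r|>4s$; compare the $t=-1$ specialization with the resultant formula for $A_\sc$), but it misstates the starting point and leaves both substantive steps unproven. First, the cabling formula is not a sum of fixed length: the companion colors appearing in $J_{\sc,n}$ are $2ks+1$ with $|k|\le (n-1)/2$, so the number of terms grows linearly in $n$, and one cannot directly ``clear the finitely many sampled terms by means of $\alpha_\se$.'' The paper first collapses the sum by taking the difference of two cable terms --- the peeling relation $(t^{2rs}\sm^{rs}\sl^2 - t^{-2rs}\sm^{-rs})J_{\sc,n} = t^{2r}\sm^{r}J_{\se,s(n+1)+1} - t^{-2r}\sm^{-r}J_{\se,s(n+1)-1}$ of equation (\ref{eq: s>2 peel}) --- and only then decimates (Proposition \ref{prop: s>2 annihilator}). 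This is repairable, but it is a needed idea, and it is responsible for the factor of the annihilator that matches $F_{(r,s)}$ in $A_\sc$.

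The more serious gap is your claim that ``specializing the construction of $\beta$ at $t=-1$ then matches the resultant computation of $A_\sc$.'' That identification is the core difficulty of the $s>2$ case, not a formality: the operator is produced by solving a $(2s+3)\times(2s+4)$ linear system, and there is no a priori reason its $t=-1$ specialization equals $Res_\fl(\tilde{\alpha}_\se(-1,\sm^s,\fl),\fl^s-\sl)$. The paper proves this (Proposition \ref{prop: Q=Res}) by building a commutative model: an auxiliary sequence over $\complex(\sm)$ annihilated by $\tilde{\alpha}_\se(-1,\sm^s,\sl)$, a proof that the resultant is irreducible over $\complex(\sm)$ (no power of a root of $\tilde{\alpha}_\se(-1,\sm^s,\fl)$ lies in $\complex(\sm)$), and principality of the inhomogeneous annihilator ideal, which forces the two degree-2 annihilators to agree up to $\complex(\sm)$. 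You also misplace the hypothesis: $|r|>4s$ plays no role in this identification (the resultant automatically has $\sl$-degree 2, being $\pm f_2^s\prod_i(\beta_i^s-\sl)$ over the two roots $\beta_i$), and the resultant eliminates the companion \emph{longitude} variable, not its meridian. The inequality enters only in the minimality step, and there not through boundary slopes per se but through the degree dichotomy of Lemma \ref{lemma: C Degree}: for $|r|>4s$ the degrees of $J_{\sc,n}$ grow like $-rsn^2$ while the companion terms grow like $-4s^2n^2$, and this mismatch of quadratic coefficients (together with Lemma \ref{lemma: L>2} and an irreducibility argument at $t=-1$ to exclude intermediate degrees) is what rules out lower-order recurrences. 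Since you explicitly defer that argument and assume the resultant identification, the proposal as written proves neither of the two steps on which the theorem rests.
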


A cabling formula for $A$-polynomials of cabled knots
 in $S^3$ is  given in \cite{NZ}. In particular, when
 $C$ is  the $(r, s)$-cabled knot over the figure eight knot $E$ in $S^3$,  its
 $A$-polynomial $A_\sc(\sm, \sl)$  is given in terms of the $A$-polynomial $A_\se(\sm,\sl)$ of the figure eight knot.
For  a pair of relatively prime integers $(r,s)$  with $s\geq 2$, define $F_{(r,s)}(\sm,\sl) \in \z[\sm,\sl]$ by:
$$F_{(r, s)}(\sm, \sl):= \left\{\begin{array}
 {ll}\sm^{2r}\sl+1,\;\;&  \mbox{if $s=2$, $r>0$,}\\
\sl+\sm^{-2r},\;\;  &\mbox{if $s=2$,  $r<0$,}\\
\sm^{2rs}\sl^2-1,\;\;&  \mbox{if $s>2$, $r>0$,}\\
\sl^2-\sm^{-2rs},\;\; & \mbox{if $s>2$, $r<0$}\end{array}\right.$$
Then
\begin{equation}\label{A-poly of C}A_\sc(\sm,\sl)=
(\sl-1)F_{(r, s)}(\sm, \sl) Red(Res_\fl(\frac{A_{\se}(\sm^{s},\fl)}{\fl-1},\fl^s-\sl)),
\end{equation}
where $Red$ denotes the function reducing polynomials by eliminating repeated factors and $Res_\fl$ denotes the polynomial resultant eliminating the variable $\fl$; see Section \ref{subsec: Resultant} for a definition. The $A$-polynomial $A_{\se}(\sm,\sl)$ of the figure eight knot $E$, is
$$A_\se (\sl,\sm) = (\sl-1)(-\sl+\sl \sm^2+ \sm^4+2\sl \sm^4+\sl^2\sm^4+\sl \sm^{6}-\sl \sm^{8}).$$
Meanwhile, its colored Jones polynomial, as given in \cite{G} but in our normalized form, is
$$J_{\se,n}(t) = \frac{t^{2n}-t^{-2n}}{t^2-t^{-2}}\sum_{k=0}^{n-1} \prod_{i=1}^{k}((t^{2n}-t^{-2n})^2-(t^{2i}-t^{-2i})^2).$$

The figure eight knot $E$ has an inhomogeneous recurrence polynomial $(\tilde{\alpha}_\se(t,\sm,\sl),b(t,\sm))$, found in \cite{G} and changed here to suit our normalization, given by
\begin{equation} \label{eq: E Inhomogeneous}
\begin{array}{rl}
\tilde{\alpha}_\se(t,\sm,\sl)J_{\se,n}(t) &= (P_2(t,\sm)\sl^2 + P_1(t,\sm)\sl + P_0(t,\sm))J_{E,n}(t) = b(t,\sm), \text{ where}\\
P_2(t,\sm) &= t^{10}\sm^4(-1+t^4\sm^4)\\
P_1(t,\sm) &= -(-1+t^4\sm^2)(1+t^4\sm^2)(1-t^4\sm^2-t^4\sm^4 -t^{12}\sm^4-t^{12}\sm^6+t^{16}\sm^8)\\
P_0(t,\sm) &= t^{6}\sm^4 (-1+t^{12}\sm^4)\\
b(t,\sm) &= \frac{\sm (1+t^4\sm^2)(-1+t^4\sm^4)(-t^2+t^{14}\sm^4)}{t^2-t^{-2}}.
\end{array}
\end{equation}
Notice that any inhomogeneous recurrence relation gives rise to a homogeneous one, since if $P(t,\sm,\sl)J_{\sk,n}(t) = b(t,\sm)$ for $b(t,\sm) \neq 0$, then
$$(\sl-1)b(t,\sm)^{-1}P(t,\sm,\sl)J_{\sk,n}(t) = 0,$$
and by multiplying by a suitable polynomial in $t$ and $\sm$, we can recover an annihilator in $\mathcal{A}_\sk$. 

A cabling formula for the $n$-colored Jones polynomial of
the $(r,s)$-cabled knot $C$ over a knot $K$ is given in \cite{Morton} (see also
\cite{Veen}) which in our normalized form is:
\begin{equation}\label{cabling formula of colored Jones}
 J_{\sc,n}(t) =
t^{-rs(n^2-1)}\sum_{k=-\frac{n-1}{2}}^{\frac{n-1}{2}}
t^{4rk(ks+1)}J_{\sk,2ks+1}(t).
\end{equation}
We divide the proof Theorem \ref{main result} into two cases:
\begin{enumerate}
\item $s = 2$;
\item $s > 2$.
\end{enumerate}
In each case, we use a relation for the cabling formula (\ref{cabling formula of colored Jones}) and an inhomogeneous recurrence relation of $J_{\se,n}(t)$ to obtain an annihilator of $J_{\sc,n}(t)$, and then proceed to prove that it is the recurrence polynomial $\alpha_\sc(t,\sm,\sl)$ of $C$ when $r$ is not between $-4s$ and $4s$ by using formulas for the degrees of the colored Jones polynomials given in Section \ref{sec: Formulas}. For the case $s=2$, we directly compute $A_\sc(\sm,\sl)$ and $\alpha_\sc(-1,\sm,\sl)$ to verify the $AJ$-conjecture, while for the case $s > 2$, we verify the $AJ$-conjecture by exploring the relationship between our annihilator and the resultant.

In practice, we find a minimal degree annihilator in $\wA_\sc$ of the form $P = \sum_{i=0}^d P_i L^i$, which is equal to the normalized recurrence polynomial up to a rational function $C(t,\sm)$, which is enough for the purpose of verifying the $AJ$-conjecture. Also note that changing the sign of $r$ only changes the $A$-polynomial of $C$ up to a power of $\sm$, which is of no consequence when checking up to a factor of a rational function in $\sm$.

\subsection{Acknowledgments} The author would like to thank Xingru Zhang for numerous discussions and the referee for many helpful comments. Anh T. Tran \cite{TranCable} has independently obtained similar results for the $(r,2)$-cables of the figure eight knot.

\section{Degree Formulas and Preliminaries} \label{sec: Formulas}
For the rest of the paper, let $E$ denote the figure eight knot and $C$ the $(r,s)$-cabled knot over $E$ unless specified otherwise. Also, we often write $J_{\sk,n}$ rather than $J_{\sk,n}(t)$ for brevity.

For a polynomial $f(t) \in \z[t^{\pm 1}]$, let $\ell[f]$ and $\hbar[f]$ denote the lowest degree and highest degree of $f$ in $t$ respectively. For $f(t), g(t) \in \z[t^{\pm 1}]$, these functions satisfy $\ell[fg] = \ell[f]+\ell[g]$ and $\hbar[fg] = \hbar[f]+\hbar[g]$.

The degrees of the colored Jones polynomials of alternating knots are known \cite[Proposition 2.1]{Le}. For a non-trivial knot $K$ with reduced alternating diagram $D$ having $k$ crossings and writhe $w$, if $n > 0$, then
\begin{align*}
\hbar[J_{\sk,n}] &= k(n-1)^2-w(n^2-1)+2(n-1)s_+(D),\\
\ell[J_{\sk,n}] &= -k(n-1)^2-w(n^2-1)-2(n-1)s_-(D),
\end{align*}
where $\hbar$ and $\ell$ denote the highest and lowest degrees in $t$, respectively, and $s_+(D)$, $s_-(D)$ are the number of circles obtained by positively or negatively smoothing the crossings of $D$. Notice that $s_-(D)+s_+(D) = k+2$, so $2 \leq s_-(D), s_+(D) \leq k$. In particular, the figure eight knot has a reduced alternating diagram with 4 crossings, $s_+(D) = s_-(D) = 3$, and zero writhe, so we have the following lemma.

\begin{lemma}\label{lemma: E Degree}
Let $E$ be the figure eight knot. Then for all $n \neq 0$,
\begin{align*}
\hbar[J_{E,n}] &= 4(|n|-1)^2+6(|n|-1) = 4n^2-2|n|-2,\\
\ell[J_{E,n}] &= -4(|n|-1)^2-6(|n|-1) = -4n^2+2|n|+2.
\end{align*}
\end{lemma}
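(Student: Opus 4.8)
The plan is to reduce the statement to the general degree formula for alternating knots quoted just above, specialized to the figure eight knot, and then to dispose of negative colors via the antisymmetry convention $J_{E,-n} = -J_{E,n}$.

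First I would treat the case $n > 0$. The figure eight knot is non-trivial and admits a reduced alternating diagram $D$, so the hypotheses of the cited degree formula are met; for this diagram $k = 4$, the writhe is $w = 0$, and $s_+(D) = s_-(D) = 3$. Substituting these into the formula for the highest degree gives
\[
\hbar[J_{E,n}] = 4(n-1)^2 - 0\cdot(n^2-1) + 2(n-1)\cdot 3 = 4(n-1)^2 + 6(n-1),
\]
and the routine expansion $4(n-1)^2 + 6(n-1) = 4n^2 - 2n - 2$ (with $|n| = n$ since $n > 0$) puts this in the asserted closed form. The computation for the lowest degree is identical except for the overall sign attached to the $s_-(D)$ term, yielding $\ell[J_{E,n}] = -4(n-1)^2 - 6(n-1) = -4n^2 + 2n + 2$.

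Next I would extend to $n < 0$. Writing $m = |n| = -n > 0$, the convention $J_{E,-m} = -J_{E,m}$ shows that $J_{E,n}$ and $J_{E,m}$ differ only by an overall sign, and negation alters neither the highest nor the lowest power of $t$ that occurs. Hence $\hbar[J_{E,n}] = \hbar[J_{E,m}]$ and $\ell[J_{E,n}] = \ell[J_{E,m}]$, and the $n > 0$ computation already carried out yields exactly the stated values with $m = |n|$ in place of $n$. This is precisely why the final formulas are phrased in terms of $|n|$ rather than $n$.

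I do not anticipate any real obstacle: the lemma is a direct specialization of a cited result combined with a sign symmetry. The only point meriting a moment of care is that the alternating-knot degree formula is stated only for positive colors, so the negative-color case must be routed through the antisymmetry convention rather than by formally inserting a negative value of $n$ into the formula.
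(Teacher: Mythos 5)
Your proposal is correct and follows essentially the same route as the paper, which obtains the lemma by specializing the alternating-knot degree formula of L\^e with $k = 4$, $w = 0$, and $s_+(D) = s_-(D) = 3$. Your explicit handling of negative colors via $J_{E,-n} = -J_{E,n}$ is exactly what the paper does implicitly by stating the result in terms of $|n|$ under its convention for negative colors.
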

The following lemma gives the degrees of $J_{\sc,n}(t)$ for any cabled knot $C$ over an alternating knot $K$.
\begin{lemma}\label{lemma: C Degree}
Let $C$ be the $(r,s)$-cable knot over a knot $K$ with a reduced alternating diagram $D$ with $N$ crossings and $s_-(D) = m$, $s_+(D) = N+2-m$. Then for $n > N$, we have
\begin{align*}
\hbar[J_{C,n}] &= \begin{cases} \!\! \begin{array}{l}(N-w)s^2 n^2 + (2r -2(-2+m+r+w-N)s-2(N-w)s^2) n\\
																 \quad+(-2r+2(-2+m+r+w-N)s+(N-w)s^2),\end{array}  & r> -(N-w)s\\
                                 -rs(n^2-1)+\frac{1}{2}(1-(-1)^{n-1})(s-2)(4+r+(N-w)s-2m), & r < -(N-w)s\end{cases} \\
\ell[J_{C,n}] &= \begin{cases}  \!\! \begin{array}{l}-(N+w)s^2n^2+(2r-2(r+m+w)s+2(N+w)s^2)n \\
																\qquad +(-2r+2(r+m+w)s-(N+w)s^2),\end{array} & r < (N+w)s \\
                                -rs(n^2-1)+\frac{1}{2}(1-(-1)^{n-1})(s-2)(r-(N+w)s+2(N-m)), & r > (N+w)s \end{cases}
\end{align*}
In particular, for the cabled knot $C$ over the figure eight knot, we set $N = 4$, $m = 3$, and $w = 0$:
\begin{align*}
\hbar[J_{C,n}] &= \begin{cases}  4s^2n^2+(2r+6s-2rs-8s^2)n+(-2r-6s+2rs+4s^2),  & r> -4s\\
                                 -rs(n^2-1)+\frac{1}{2}(1-(-1)^{n-1})(s-2)(-2+r+4s), & r < -4s\end{cases} \\
\ell[J_{C,n}] &= \begin{cases}  -4s^2n^2+(2r-6s-2rs+8s^2)n+(-2r+6s+2rs-4s^2), & r < 4s \\
                                -rs(n^2-1)+\frac{1}{2}(1-(-1)^{n-1})(s-2)(2+r-4s), & r > 4s \end{cases}
\end{align*}
\end{lemma}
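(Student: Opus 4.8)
The plan is to feed the cabling formula (\ref{cabling formula of colored Jones}) into the colored Jones degree formula for alternating knots stated just before Lemma \ref{lemma: E Degree}. Writing
$J_{\sc,n} = t^{-rs(n^2-1)}\sum_{k} t^{4rk(ks+1)} J_{\sk,2ks+1}$,
I would compute, for each summation index $k\in\{-\tfrac{n-1}{2},\dots,\tfrac{n-1}{2}\}$, the highest (resp. lowest) $t$-degree of the summand $t^{-rs(n^2-1)+4rk(ks+1)} J_{\sk,2ks+1}$, then take the maximum (resp. minimum) over $k$, and finally argue that this optimal value is actually attained by $J_{\sc,n}$ itself, i.e. that no cancellation occurs at the extreme degree.

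The central computation is the degree of a single summand. Since the degree formula is phrased in terms of $|\text{color}|$ (via $J_{\sk,-n}=-J_{\sk,n}$) and since $s_+(D)=N+2-m\neq m=s_-(D)$ in general, I would split the analysis according to the sign of the color $j=2ks+1$; because $s\ge 2$ this is exactly the split $k>0$ (positive color) versus $k<0$ (negative color), with $k=0$ giving the trivial color. Substituting $|j|-1 = 2ks$ for $k\ge 0$ and $|j|-1 = -2ks-2$ for $k<0$ into $\hbar[J_{\sk,j}] = N(|j|-1)^2 - w(j^2-1) + 2(|j|-1)s_+(D)$ and adding the prefactor exponent $-rs(n^2-1)+4rk(ks+1)$, the highest degree of the $k$-th summand becomes a quadratic in $k$,
\[
4s\bigl(r+(N-w)s\bigr)k^2 + (\text{linear in }k) - rs(n^2-1),
\]
whose leading coefficient $4s(r+(N-w)s)$ is the same for both sign branches while the lower-order terms differ. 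The analogous computation with $\ell[J_{\sk,j}] = -N(|j|-1)^2 - w(j^2-1) - 2(|j|-1)s_-(D)$ produces, for $\ell$, a quadratic in $k$ with leading coefficient $4s(r-(N+w)s)$.

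The shape of these parabolas is what dictates the two cases. For $\hbar$ I would maximize over the discrete interval: when $r+(N-w)s>0$ the parabola opens upward, so the maximum sits at an endpoint, and comparing $k=\pm\tfrac{n-1}{2}$ (their difference is $4(n-1)(r+(N-w)s)-4(m-2)$, positive for $n>N$) shows it is uniquely at $k=\tfrac{n-1}{2}$; expanding that value yields the quadratic-in-$n$ expression. When $r+(N-w)s<0$ the parabola opens downward and the maximum sits near $k=0$: for $n$ odd the index $k=0$ is available and gives exactly $-rs(n^2-1)$, while for $n$ even the maximum is at $k=-\tfrac12$ (one checks it beats $k=+\tfrac12$ by $-4(r+(N-w)s)+4(m-2)>0$), producing the parity factor $\tfrac12(1-(-1)^{n-1})$ and the stated constant. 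The analysis for $\ell$ is entirely symmetric, with opening-up and opening-down interchanged and governed instead by the sign of $r-(N+w)s$. Substituting the figure eight data $N=4$, $m=s_-(D)=3$, $w=0$ then specializes all formulas to those stated for $C$.

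The main obstacle is not the degree bookkeeping but the \emph{no-cancellation} claim, since the degree of a sum can drop below the maximal degree of its summands if leading terms cancel. I would handle this by proving the optimal index is \emph{unique}. In the endpoint cases this is immediate. In the interior cases it requires the parabola estimates above: using $r<-(N-w)s$ (resp. $r>(N+w)s$) together with $2\le m\le N$, one shows every other admissible $k$ gives a strictly smaller (resp. larger) degree — for instance that the downward parabola satisfies $Q(k)<Q(0)=0$ for all $k\ne 0$ in range when $n$ is odd. The hypothesis $n>N$ is precisely what secures these comparisons and keeps all occurring colors in the range where the alternating degree formula applies. Since a single summand attains the extreme degree, its nonzero leading coefficient survives, so the degree of $J_{\sc,n}$ equals the claimed value.
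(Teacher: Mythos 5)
Your proposal follows essentially the same route as the paper's proof: substitute the cabling formula into the alternating-knot degree formula, obtain a piecewise quadratic in the summation index $k$ (split according to the sign of the color $2ks+1$), and case on the sign of the leading coefficient $4s\bigl(r+(N-w)s\bigr)$ (resp. $4s\bigl(r-(N+w)s\bigr)$) to decide between the far-endpoint optimum, giving the quadratic-in-$n$ formulas, and the near-zero optimum at $k=0$ or $k=-\tfrac12$, giving the parity formulas. The only differences are cosmetic: the paper writes out $\ell$ in detail and declares $\hbar$ ``similar,'' and it simply asserts that the extreme degree of the sum equals the extreme over the summands, whereas you make the no-cancellation/uniqueness point explicit — a point the paper leaves implicit.
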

The restriction $n > N$ can usually be relaxed. For the figure eight knot case, the formulas hold for all $n > 0$.
\begin{proof}
We know from the cabling formula
$$ \ell[J_{C,n}] = -rs(n^2-1) + \min\{\ell[J_{\sk,2sk+1}] + 4rk(sk+1) \mid -\tfrac{n-1}{2} \leq k \leq \tfrac{n-1}{2}\}$$
where $k$ is integer or half-integer valued. Let $g(k) = \ell[J_{E,2sk+1}] + 4rk(sk+1)$. From the discussion above, we know
\begin{align*}
g(k) &= -N(|2sk+1|-1)^2-w((2sk+1)^2-1)-2(|2sk+1|-1)m +4rk(sk+1)\\
&=\begin{cases} 4s k^2  (r - (N+w)s) + 4 k (r - (m+w)s), & k \geq 0\\
                4s k^2(r-(N+w)s)+4k(r-(2N-m+w)s)-4(N-m), & k \leq -\frac{1}{2} \end{cases}
\end{align*}
Since $r$ and $s$ are relatively prime, $r \neq (N+w)s$, so each piece of $g(k)$ is quadratic with critical points $k = \frac{r-(m+w)s}{-2s(r-(N+w)s)}$ and $k = \frac{r-(2N-m+w)s}{-2s(r-(N+w)s)}$ respectively. Notice that when $r > (N+w)s$, these critical points are local minima. Moreover, $2 \leq m \leq N$, so when $r > (N+w)s$, we have $r > (m+w)s$, so the first point is negative. Meanwhile, the second point is
$$\frac{r-(2N-m+w)s}{-2s(r-(N+w)s)} = \frac{r-(N+w)s}{-2s(r-(N+w)s)} + \frac{(-N+m)s}{-2s(r-(N+w)s)} = \frac{-1}{2s} + \frac{N-m}{2(r-(N+w)s)}  > \frac{-1}{2},$$
so each component of $g(k)$ is minimized at their endpoints $k = 0$ and $k = -1/2$. Then $g(0) = 0$ and $g(\frac{-1}{2}) = (s-2)(r-(N+w)s+2(N-m))$. The point $k = 0$ is attained when $n$ is odd, and $k = \frac{-1}{2}$ occurs when $n$ is even, so this gives us the formula
$$\ell[J_{C,n}] = -rs(n^2-1)+\frac{1}{2}(1-(-1)^{n-1})(s-2)(r-(N+w)s+2(N-m))$$
when $r > (N+w)s$, for all $n > 0$. 

In the case where $r < (N+w)s$, $g(k)$ must be minimized at either $k = \frac{-(n-1)}{2}$ or $k = \frac{n-1}{2}$. For $n > 1$, we see that
$$g(\tfrac{n-1}{2}) - g(\tfrac{-(n-1)}{2}) = 4(r-(N+w)s)(n-1)+4(N-m),$$
which is negative for $n > N$, so $g(\frac{n-1}{2})$ is smaller. Therefore
$$\ell[J_{C,n}] =-(N+w)s^2n^2+(2r-2(r+m+w)s+2(N+w)s^2)n+(-2r+2(r+m+w)s-(N+w)s^2).$$
The proof for $\hbar[J_{C,n}]$ is similar.
\end{proof}

The following lemma is a generalization of Proposition 2.2 in \cite{Le}.
\begin{lemma} \label{lemma: L>2}
Let $f: \z \to \complex[t^{\pm 1}]$ be a discrete function. Suppose $f$ satisfies the following conditions:
\begin{enumerate}
\item There exists an integer $k$ and a nonzero $c \in \complex$ such that for all $n$, $f(-n) = c f(n+k)$,
\item $f$ satisfies a nontrivial homogeneous recurrence relation, and
\item There exists an integer $N$ such that for all $n > N$, $\hbar[f(n)]-\ell[f(n)]$ is not a linear function in $n$.
\end{enumerate}
Then any nontrivial recurrence relation of $f$ has order at least 2.
\end{lemma}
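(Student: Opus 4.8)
The plan is to argue by contradiction: assuming $f$ admits a nontrivial recurrence of order at most $1$, I will show that the \emph{breadth} $d(n):=\hbar[f(n)]-\ell[f(n)]$ is eventually an affine function of $n$, contradicting hypothesis (3). First I dispose of order $0$: a relation $P_0(t,\sm)f=0$ with $P_0\neq 0$ gives $P_0(t,t^{2n})f(n)=0$, and since the top $\sm$-term of $P_0$ survives the substitution $\sm\mapsto t^{2n}$ for all large $n$, this forces $f(n)=0$ for all large $n$, contradicting (3). So I may assume an order-$1$ relation $P_1(t,\sm)\sl f+P_0(t,\sm)f=0$ with $P_1\neq 0$; if $P_0=0$ this again forces $f(n)=0$ for all large $n$, so $P_0\neq 0$ as well. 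Thus for all large $n$ I can write $f(n+1)=R_n\,f(n)$ with the nonzero rational function $R_n:=-P_0(t,t^{2n})/P_1(t,t^{2n})\in\complex(t)$.

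Next I track how $d$ evolves. Because $\hbar$ and $\ell$ are additive on products, I get the exact recursion $d(n+1)-d(n)=\beta(n)$, where $\beta(n):=\hbar[R_n]-\ell[R_n]$ is the breadth of $R_n$, so that $\beta(n)=\bigl(\hbar-\ell\bigr)\bigl[P_0(t,t^{2n})\bigr]-\bigl(\hbar-\ell\bigr)\bigl[P_1(t,t^{2n})\bigr]$. Writing $P_i(t,\sm)=\sum_j c_{i,j}(t)\sm^j$ and letting $\Delta_i$ be the spread between the largest and smallest $j$ with $c_{i,j}\neq 0$, the substitution $\sm\mapsto t^{2n}$ shows that the breadth of $P_i(t,t^{2n})$ is, for $n\to+\infty$, an affine function of slope $+2\Delta_i$, while for $n\to-\infty$ it has slope $-2\Delta_i$ (the highest and lowest $\sm$-powers exchange their dominant roles). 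Hence $\beta(n)$ is eventually affine with slope $\alpha:=2(\Delta_0-\Delta_1)$ as $n\to+\infty$ and slope $-\alpha$ as $n\to-\infty$. Note that if $\alpha\neq 0$ then $d(n)$ would be genuinely quadratic, which does not by itself contradict (3); ruling this out is exactly where the symmetry enters.

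Now I bring in the symmetry (1). Evaluating the order-$1$ relation at the indices $n+k$ and $-n-1$ and substituting $f(-m)=c\,f(m+k)$, I obtain two proportional linear relations between $f(n+k)$ and $f(n+k+1)$; comparing them yields the functional equation $R_n\,R_{k-1-n}=1$ in $\complex(t)$, valid for all large $n$. Taking breadths and using additivity gives $\beta(n)=-\beta(k-1-n)$. Feeding in the asymptotics, the left-hand side has slope $\alpha$ as $n\to+\infty$, whereas the right-hand side, governed by the $n\to-\infty$ behavior of $\beta$ at the argument $k-1-n$, has slope $-\alpha$; therefore $\alpha=0$, i.e.\ $\Delta_0=\Delta_1$. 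Consequently $\beta(n)$ is eventually \emph{constant}, and summing $d(n+1)-d(n)=\beta(n)$ shows that $d(n)$ agrees with an affine function of $n$ for all large $n$. This contradicts (3), so no recurrence of order $\le 1$ exists and every nontrivial recurrence of $f$ has order at least $2$.

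I expect the main obstacle to be the clean derivation of $R_n\,R_{k-1-n}=1$ from the symmetry (1)—keeping the index shifts and the two substitutions $\sm\mapsto t^{2n}$ and $\sm\mapsto t^{-2n-2}$ straight—together with the careful bookkeeping of the breadth asymptotics, in particular the sign reversal of the slope of $\beta$ between $+\infty$ and $-\infty$, which is precisely what makes the symmetry force $\alpha=0$. One should also check that for all large $n$ no accidental cancellation kills the leading or trailing $\sm$-term after the substitution, so that $R_n$ is genuinely nonzero and the slope computations are valid; hypothesis (2) is used only to guarantee that nontrivial recurrences exist at all.
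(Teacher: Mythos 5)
Your proof is correct, and it reaches the paper's contradiction by a genuinely different route. The paper argues ideal-theoretically: assuming an order-$1$ relation with $P_0,P_1$ coprime, it uses that $\wA_f$ is a principal left ideal of $\wT$ generated by $P_1\sl+P_0$, produces from the symmetry a second degree-$1$ annihilator $P_0(t,t^{2k-2}\sm^{-1})\sl+P_1(t,t^{2k-2}\sm^{-1})$, and from proportionality plus coprimality extracts the exact identity $P_0(t,\sm)=P_1(t,t^{2k-2}\sm^{-1})$; since replacing $\sm$ by $t^{2k-2}\sm^{-1}$ preserves the $\sm$-spread, the multiplier $-P_1(t,t^{2k-2-2n})/P_1(t,t^{2n})$ has eventually constant breadth, so $\hbar[f(n)]-\ell[f(n)]$ is eventually linear. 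You never touch the ideal structure: you derive the pointwise functional equation $R_nR_{k-1-n}=1$ directly from the recursion and the symmetry, take breadths to get $\beta(n)=-\beta(k-1-n)$, and compare the (exact, eventually affine) slopes of $\beta$ at $+\infty$ and $-\infty$ to force $\Delta_0=\Delta_1$ --- a weaker conclusion than the paper's identity, but still enough, since it makes $\beta$ eventually constant. Your route is more elementary (it needs neither the fact that $\wT$ is a principal left ideal domain nor any coprimality normalization), and you also dispose of order $0$ and of the case $P_0=0$ explicitly, which the paper passes over; the paper's route buys the stronger structural relation between $P_0$ and $P_1$, which makes constancy of the breadth increment immediate without any two-sided asymptotic bookkeeping. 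One point you should make explicit: both the recursion $d(n+1)-d(n)=\beta(n)$ and the derivation of $R_nR_{k-1-n}=1$ require $f(n)\neq 0$ for all large $n$; this is harmless, because a single zero value propagates forward under an order-$1$ relation, so otherwise $f$ would be eventually zero and hypothesis (3) would fail in the degenerate sense --- the same tacit assumption the paper makes.
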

\begin{proof}
Suppose $f: \z \to \z[t^{\pm 1}]$ satisfies the given hypotheses, and assume toward a contradiction that there is homogeneous recurrence relation of $f$ of order 1; that is, there exist nonzero $P_1(t,t^{2n})$, $P_0(t,t^{2n})$ such that $P_1(t,t^{2n}) f(n+1) + P_0(t,t^{2n}) f(n) = 0$. We can assume without loss of generality that $P_1(t,t^{2n})$ and $P_0(t,t^{2n})$ share no common factors. Then the annihilator ideal $\wA_f$ is nonzero and moreover is generated by $P_1(t,\sm) \sl + P_0(t,\sm)$ since it is a principal left ideal.

Substituting $-n$ for $n$, we know
\begin{align*}
0 &= P_1(t,t^{-2n}) f(-n+1) + P_0(t,t^{-2n}) f(-n)\\
&= P_1(t,t^{-2n}) c f(n+k-1) + P_0(t,t^{-2n}) c f(n+k).
\end{align*}
Canceling $c$ and shifting by replacing $n$ with $n-k+1$, we have
$$0 = P_1(t,t^{-2n+2k-2}) f(n) + P_0(t,t^{-2n+2k-2}) f(n+1),$$
so $P_0(t,t^{2k-2} \sm^{-1}) \sl + P_1(t,t^{2k-2} \sm^{-1})$ is also in $\wA_f$. Then this is a multiple of our generator, so since they both have $\sl$-degree 1, there is some $B(t,\sm) \in \complex(t,\sm)$ such that 
$$B(t,\sm) (P_1(t,\sm) \sl + P_0(t,\sm)) = P_0(t,t^{2k-2} \sm^{-1}) \sl + P_1(t,t^{2k-2} \sm^{-1}).$$
This implies $B(t,\sm) P_1(t,\sm) = P_0(t,t^{2k-2} \sm^{-1})$ and $B(t,\sm) P_0(t,\sm) = P_1(t,t^{2k-2} \sm^{-1})$, so solving for $B(t,\sm)$,
$$B(t,\sm) =  \frac{P_0(t,t^{2k-2} \sm^{-1})}{P_1(t,\sm)} = \frac{P_1(t,t^{2k-2} \sm^{-1})}{P_0(t,\sm)}.$$
This tells us
$$P_0(t,\sm) P_0(t,t^{2k-2} \sm^{-1}) = P_1(t,\sm) P_1(t,t^{2k-2} \sm^{-1}).$$
Since $P_0(t,\sm)$ and $P_1(t,\sm)$ are relatively prime, we must have $P_0(t,\sm)$ divides $P_1(t,t^{2k-2}\sm^{-1})$. Likewise, $P_0(t,t^{2k-2} \sm^{-1})$ and $P_1(t,t^{2k-2} \sm^{-1})$ are relatively prime, so we conclude that $P_0(t,\sm) = P_1(t,t^{2k-2} \sm^{-1})$. Substituting this into our original relation, we see
$$P_1(t,t^{2n}) f(n+1) + P_1(t,t^{2k-2-2n}) f(n) = 0$$
and thus
$$f(n+1) = -\frac{P_1(t,t^{2k-2-2n})}{P_1(t,t^{2n})} f(n).$$
Consider the degrees in $t$ of both sides. We see that for $n$ sufficiently large, the difference in breadths of $P_1(t,t^{2n})$ and $P_1(t,t^{2k-2-2n})$ is a constant $K$. This implies that $\hbar[f(n)] - \ell[f(n)]$ is a linear function in $n$ for large enough $n$, contrary to our assumption. We conclude that any recurrence relation of $f$ must have order at least 2.
\end{proof}

\section{Case $s = 2$}
 In this section, we prove the $s = 2$ case of Theorem \ref{main result} in three steps: we first find a polynomial that annihilates $J_{\sc,n}$, then verify the $AJ$-conjecture by evaluating our polynomial at $t = -1$, and finally we prove that our annihilator is of minimal order and hence the recurrence polynomial.

\subsection{An annihilator of the colored Jones polynomial}

In \cite[Lemma 3.1]{RZ}, the formula
$$J_{\sc,n+2}-t^{-4rsn-4rs}J_{\sc,n} = t^{2(r-rs)n-2rs+2r}J_{\sk,s(n+1)+1} - t^{2(-r-rs)n-2rs-2r}J_{\sk,s(n+1)-1}$$
is derived for any $(r,s)$-cabled knot $C$ over any knot $K$. Rearranging and changing to operator notation, this is
\begin{equation} \label{eq: s>2 peel}
(t^{2rs}\sm^{rs} \sl^2-t^{-2rs}\sm^{-rs})J_{\sc,n} = t^{2r}\sm^{r}J_{\sk,s(n+1)+1} - t^{-2r}\sm^{-r}J_{\sk,s(n+1)-1}.
\end{equation}
When $s = 2$, it is computed in \cite[Equation 6.1]{RZ} that
\begin{equation}J_{\sc,n+1} = -t^{-4rn-2r}J_{\sc,n}+ t^{-2rn}J_{\sk,2n+1},\end{equation}
which can be rewritten as
\begin{equation} \label{eq: s=2 peel}
(\sm^r\sl+t^{-2r}\sm^{-r})J_{\sc,n} = J_{\sk,2n+1}.
\end{equation} \label{eq: s=2 J_C(n+1)}
Therefore, to find an annihilator of $J_{\sc,n}$, it is enough to find an annihilator of $J_{\sk,2n+1}$.

\begin{lemma} \label{lemma: fig8 annihilator} Let $\tilde{\alpha}_\se(t,\sm,\sl) = P_2(t,\sm)\sl^2 + P_1(t,\sm)\sl + P_0(t,\sm)$ be the inhomogeneous recurrence polynomial defined in equation (\ref{eq: E Inhomogeneous}). Then $J_{\se,2n+1}(t)$ has an inhomogeneous recurrence relation given by the polynomial $Q(t,\sm,\sl) = Q_2(t,\sm)\sl^2+Q_1(t,\sm)\sl + Q_0(t,\sm)$, where
\begin{align*}
Q_2(t,\sm) &= P_2(t,t^4 \sm^2) P_1(t,t^2 \sm^2) P_2(t,t^6 \sm^2),\\
Q_1(t,\sm) &= P_0(t,t^4 \sm^2) P_1(t,t^6 \sm^2)P_2(t,t^2 \sm^2) - P_1(t,t^6 \sm^2) P_1(t,t^2 \sm^2) P_1(t,t^4 \sm^2)\\
&\qquad +P_2(t,t^4 \sm^2) P_1(t,t^2 \sm^2) P_0(t,t^6 \sm^2),\\
Q_0(t,\sm) &=P_0(t,t^4 \sm^2) P_1(t,t^6 \sm^2) P_0(t,t^2 \sm^2).
\end{align*}
\end{lemma}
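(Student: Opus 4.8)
The plan is to eliminate the even-indexed colored Jones polynomials from the figure-eight recurrence. Reading the inhomogeneous relation (\ref{eq: E Inhomogeneous}) at the level of values, for every integer $m$ we have
\[
P_2(t,t^{2m})\,J_{\se,m+2} + P_1(t,t^{2m})\,J_{\se,m+1} + P_0(t,t^{2m})\,J_{\se,m} = b(t,t^{2m}).
\]
I would specialize this to the three consecutive indices $m = 2n+1,\,2n+2,\,2n+3$, calling the resulting equations $(\mathrm{I}),(\mathrm{II}),(\mathrm{III})$. The only terms with an even subscript are $J_{\se,2n+2}$, which appears in $(\mathrm{I})$ and $(\mathrm{II})$, and $J_{\se,2n+4}$, which appears in $(\mathrm{II})$ and $(\mathrm{III})$; every other term is of the form $J_{\se,2k+1}$.

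Next I would form the linear combination $c_{\mathrm I}(\mathrm I)+c_{\mathrm{II}}(\mathrm{II})+c_{\mathrm{III}}(\mathrm{III})$ with
\[
c_{\mathrm I}=P_1(t,t^{2(2n+3)})P_0(t,t^{2(2n+2)}),\quad
c_{\mathrm{II}}=-P_1(t,t^{2(2n+1)})P_1(t,t^{2(2n+3)}),\quad
c_{\mathrm{III}}=P_1(t,t^{2(2n+1)})P_2(t,t^{2(2n+2)}).
\]
Because $J_{\se,2n+2}$ occurs in $(\mathrm I)$ and $(\mathrm{II})$ with coefficients $P_1(t,t^{2(2n+1)})$ and $P_0(t,t^{2(2n+2)})$, while $J_{\se,2n+4}$ occurs in $(\mathrm{II})$ and $(\mathrm{III})$ with coefficients $P_2(t,t^{2(2n+2)})$ and $P_1(t,t^{2(2n+3)})$, this choice cancels both even terms simultaneously. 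This is a purely algebraic identity --- no division is carried out --- so it holds for all $n$ and sidesteps any concern about $P_1$ vanishing at particular evaluations.

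It then remains to read off the coefficients of the surviving terms $J_{\se,2n+1}$, $J_{\se,2n+3}=J_{\se,2(n+1)+1}$ and $J_{\se,2n+5}=J_{\se,2(n+2)+1}$ and to rewrite them in operator form. The one point requiring care is the bookkeeping of $\sm$ under reindexing: letting $\sm$ act on the subsequence $n\mapsto J_{\se,2n+1}$ by $(\sm g)(n)=t^{2n}g(n)$, an evaluation at $m=2n+j$ contributes $t^{2m}=t^{2j}(t^{2n})$, so I would substitute $t^{2(2n+1)}\mapsto t^2\sm^2$, $t^{2(2n+2)}\mapsto t^4\sm^2$, and $t^{2(2n+3)}\mapsto t^6\sm^2$ into each $P_i$ according to the instance it came from. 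Carrying out these substitutions turns the coefficient of $J_{\se,2(n+2)+1}$ into $P_2(t,t^4\sm^2)P_1(t,t^2\sm^2)P_2(t,t^6\sm^2)$ and the coefficients of $J_{\se,2(n+1)+1}$ and $J_{\se,2n+1}$ into the stated $Q_1$ and $Q_0$, while the inhomogeneous side $c_{\mathrm I}\,b(t,t^{2(2n+1)})+c_{\mathrm{II}}\,b(t,t^{2(2n+2)})+c_{\mathrm{III}}\,b(t,t^{2(2n+3)})$ likewise becomes a function of $t$ and $\sm$ alone, establishing that $Q$ gives a genuine inhomogeneous recurrence relation for $J_{\se,2n+1}$. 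There is no serious obstacle beyond this substitution bookkeeping and keeping the nine product terms and their signs straight across the three instances; any overall sign discrepancy would be immaterial, since recurrence polynomials are defined only up to sign.
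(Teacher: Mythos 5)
Your linear combination is exactly the one the paper uses: your $c_{\mathrm I},c_{\mathrm{II}},c_{\mathrm{III}}$ coincide with the paper's $c_0 = P_0(t,t^{4n+4})P_1(t,t^{4n+6})$, $c_1 = -P_1(t,t^{4n+2})P_1(t,t^{4n+6})$, $c_2 = P_1(t,t^{4n+2})P_2(t,t^{4n+4})$, and the cancellation of $J_{\se,2n+2}$ and $J_{\se,2n+4}$ together with the substitutions $t^{2(2n+j)}\mapsto t^{2j}\sm^2$ yields the stated $Q_i$. The only presentational difference is that the paper \emph{derives} these coefficients by row-reducing a $5\times 7$ augmented linear system (which implicitly divides by $P_1$ and $P_2$), whereas you simply \emph{verify} the cancellation; your division-free check is a mild streamlining of the same method, not a different route.

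There is, however, a genuine gap at the end. You claim that because the right-hand side
$B(t,\sm) = c_{\mathrm I}\,b(t,t^2\sm^2) + c_{\mathrm{II}}\,b(t,t^4\sm^2) + c_{\mathrm{III}}\,b(t,t^6\sm^2)$
is a function of $t$ and $\sm$ alone, this "establishes that $Q$ gives a genuine inhomogeneous recurrence relation." That does not follow: you must also show $B(t,\sm)\neq 0$. This is not automatic, since the $c_j$ were engineered precisely to produce cancellations, and nothing a priori prevents the combination of the $b$-terms from cancelling as well; if $B$ vanished identically, you would only have produced a homogeneous relation, and the lemma as used downstream would fail. Indeed, the nonvanishing of $B$ is exactly what the paper needs next: the annihilator $R(t,\sm,\sl) = (\sl-1)B(t,\sm)^{-1}Q(t,\sm,\sl)(\sm^r\sl + t^{-2r}\sm^{-r})$ is formed by dividing by $B$, and the minimality arguments (e.g.\ Lemma \ref{lemma: s=2 not degree 2} and Lemma \ref{lemma: s=2 not degree 3}) also rely on $B\neq 0$. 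The paper closes this gap with an explicit computation: it evaluates $\lim_{t\to -1}(t^2-t^{-2})B(t,\sm)$ and exhibits the result as an explicit nonzero polynomial in $\sm$, whence $B\neq 0$. Your proof needs this (or an equivalent) verification to be complete.
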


\begin{proof}
Let $\tilde{\alpha}_\se(t,\sm,\sl) = P_2(t,\sm) \sl^2 + P_1(t,\sm) \sl + P_0(t,\sm)$ and $b(t,\sm)$ be given as in equation (\ref{eq: E Inhomogeneous}).
Changing $\sm$ to $t^{2n}$ for clarity, we have
$$P_2(t,t^{2n})J_{\se,n+2} + P_1(t,t^{2n}) J_{\se,n+1} + P_0(t,t^{2n}) J_{\se,n} = b(t,t^{2n}),$$
and substituting $2n+1$, $2n+2$, and $2n+3$ for $n$, we have
\begin{equation}\label{eq:2n relations}
\begin{array}{ll}
P_2(t,t^{4n+2})J_{\se,2n+3} + P_1(t,t^{4n+2}) J_{\se,2n+2} + P_0(t,t^{4n+2}) J_{\se,2n+1} &= b(t,t^{4n+2}),\\
P_2(t,t^{4n+4})J_{\se,2n+4} + P_1(t,t^{4n+4}) J_{\se,2n+3} + P_0(t,t^{4n+4}) J_{\se,2n+2} &= b(t,t^{4n+4}),\\
P_2(t,t^{4n+6})J_{\se,2n+5} + P_1(t,t^{4n+6}) J_{\se,2n+4} + P_0(t,t^{4n+6}) J_{\se,2n+3} &= b(t,t^{4n+6}).
\end{array}\end{equation}
A second degree inhomogeneous recurrence relation of $J_{\se,2n+1}(t)$ has the form
$$Q_2(t,t^{2n}) J_{\se,2n+5} + Q_1(t,t^{2n}) J_{\se,2n+3} + Q_0(t,t^{2n}) J_{\se,2n+1} = B(t,t^{2n})$$
for some rational functions $Q_i(t,t^{2n}), B(t,t^{2n})$.

We claim that we can find a linear combination of the relations (\ref{eq:2n relations}) that is of this form. That is, we want to solve
$$\sum_{j=0}^2  c_j \sum_{i=0}^2 P_i(t,t^{4n+2j+2}) J_{\se,2n+1+i+j} = \sum_{i=0}^2 Q_i(t,t^{2n}) J_{\se,2n+1+2i}$$
or equivalently
$$0 = \sum_{j=0}^2  c_j \sum_{i=0}^2 P_i(t,t^{4n+2j+2}) J_{\se,2n+1+i+j} - \sum_{i=0}^2 Q_i(t,t^{2n}) J_{\se,2n+1+2i}$$
for the unknown coefficients $c_0,c_1,c_2 \in \complex(t,\sm)$ and $Q_0,Q_1,Q_2$. It is enough to find the $c_i$'s and $Q_i$'s that make the coefficients on each $J_{\se,2n+1+i+j}$ vanish. Then we have the following system of equations:
$$\begin{array}{ll}
0 & = (c_0 P_0(t,t^{4n+2})-Q_0)J_{\se,2n+1}\\
0 & = (c_0 P_1(t,t^{4n+2}) +c_1 P_0(t,t^{4n+4})) J_{\se,2n+2}\\
0 & = (c_0 P_2(t,t^{4n+2}) +c_1 P_1(t,t^{4n+4}) +c_2 P_0(t,t^{4n+6})-Q_1)J_{\se,2n+3}\\
0 & = (c_1 P_2(t,t^{4n+4}) +c_2 P_1(t,t^{4n+6}))J_{\se,2n+4}\\
0 & = ( c_2 P_2(t,t^{4n+6}) -Q_2)J_{\se,2n+5}
\end{array}$$
Setting the coefficients equal to zero, we form a $5 \times 7$ augmented matrix:
$$\left[\begin{array}{cccccc|c}
 P_0(t,t^{4n+2}) &0 &0 & -1 &0 &0 & 0\\
 P_1(t,t^{4n+2}) & P_0(t,t^{4n+4}) &0 &0 &0 &0 & 0\\
 P_2(t,t^{4n+2}) & P_1(t,t^{4n+4}) & P_0(t,t^{4n+6}) &0 &-1 &0 & 0\\
 0& P_2(t,t^{4n+4}) & P_1(t,t^{4n+6}) &  0& 0&0 & 0\\
 0& 0&  P_2(t,t^{4n+6}) &0  &0 & -1 & 0
\end{array}\right]$$
where the columns correspond to $c_0,c_1,c_2,Q_0,Q_1,Q_2$ respectively. We can simply row-reduce this:
$$\renewcommand*{\arraystretch}{1.5}\setlength{\arraycolsep}{1pt} \left[\begin{array}{cccccc|c} 
1 & 0 & 0 & 0 & 0 & -\frac{P_0(t,t^{4n+4})P_1(t,t^{4n+6})}{P_1(t,t^{4n+2}) P_2(t,t^{4n+4}) P_2(t,t^{4n+6})} & 0\\
0 & 1 & 0 & 0 & 0 & \frac{P_1(t,t^{4n+6})}{ P_2(t,t^{4n+4}) P_2(t,t^{4n+6})} & 0\\
0 & 0 & 1 & 0 & 0 & -\frac{1}{ P_2(t,t^{4n+6})} & 0\\
0 & 0 & 0 & 1 & 0 & -\frac{P_0(t,t^{4n+2})P_0(t,t^{4n+4})P_1(t,t^{4n+6})}{P_1(t,t^{4n+2}) P_2(t,t^{4n+4}) P_2(t,t^{4n+6})} & 0\\
0 & 0 & 0 & 0 & 1 & \frac{P_1(t,t^{4n+2})P_1(t,t^{4n+4})P_1(t,t^{4n+6})-P_0(t,t^{4n+4})P_1(t,t^{4n+6})P_2(t,t^{4n+2})-P_0(t,t^{4n+6})P_1(t,t^{4n+2})P_2(t,t^{4n+4})}{P_1(t,t^{4n+2}) P_2(t,t^{4n+4}) P_2(t,t^{4n+6})} & 0
\end{array}\right]$$
Thus the system has a one-dimensional solution since $P_1(t,t^{4n+2})$ is nonzero. Then we can choose $Q_2 = P_1(t,t^{4n+2}) P_2(t,t^{4n+4}) P_2(t,t^{4n+6})$ to clear all of the denominators, which gives
\begin{align*}
Q_0 &= -P_0(t,t^{4n+2})P_0(t,t^{4n+4})P_1(t,t^{4n+6}),\\
Q_1 &= P_1(t,t^{4n+2})P_1(t,t^{4n+4})P_1(t,t^{4n+6})-P_0(t,t^{4n+4})P_1(t,t^{4n+6})P_2(t,t^{4n+2})\\
&\qquad -P_0(t,t^{4n+6})P_1(t,t^{4n+2})P_2(t,t^{4n+4}),\\
c_0 &= P_0(t,t^{4n+4}) P_1(t,t^{4n+6}),\\
c_1 &= -P_1(t,t^{4n+2}) P_1(t,t^{4n+6}),\\
c_2 &= P_1(t,t^{4n+2}) P_2(t,t^{4n+4}).
\end{align*}
Therefore, we have 
\begin{align*}
\sum_{i=0}^2 Q_i(t,t^{2n}) J_{\se,2n+1+2i} &= \sum_{j=0}^2  c_j \sum_{i=0}^2 P_i(t,t^{4n+2j+2}) J_{\se,2n+1+i+j} \\
&= \sum_{j=0}^2 c_j b(t,t^{4n+2j+2}).
\end{align*}
We can see that $B(t,\sm) := \sum_{j=0}^2 c_j b(t,t^{2j+2}\sm^{2})$ is nonzero by evaluating the limit as $t$ approaches $-1$ of $(t^2-t^{-2})B(t,\sm)$:
\begin{align*}
\lim_{t\to -1}(t^2-t^{-2})B(t,\sm) &= \lim_{t\to-1} (t^2-t^{-2})(P_0(t,t^{4}\sm^2) P_1(t,t^{6}\sm^2) b(t,t^{2}\sm^2) \\
&\qquad- P_1(t,t^{2}\sm^2) P_1(t,t^{6}\sm^2) b(t,t^{4}\sm^2) + P_2(t,t^{2}\sm^2) P_1(t,t^{4}\sm^2) b(t,t^{6}\sm^2))\\
&= (\sm (-1+\sm^4)(-1+\sm^4)) (P_0(1,\sm^2)P_1(1,\sm^2)-P_1(1,\sm^2)^2\\
&\qquad +P_1(1,\sm^2)P_2(1,\sm^2)) \\
&= (-1 + \sm)^6 \sm^2 (1 + \sm)^6 (1 + \sm^2)^6 (1 - \sm + \sm^2) (1 + \sm + \sm^2) (1 + \sm^4)^5\\
&\qquad  (-1 + \sm^2 - \sm^4) (1 - \sm^4 - 2 \sm^8 - \sm^{12} + \sm^{16}),
\end{align*}
which is indeed nonzero. Therefore $Q(t,\sm,\sl)$ is an inhomogeneous recurrence polynomial of $J_{\se,2n+1}$.
\end{proof}

Notice that our inhomogeneous recurrence relation $Q(t,\sm,\sl) (\sm^r\sl+t^{-2r}\sm^{-r}) J_{\sc,n} = B(t,\sm)$ found in Lemma \ref{lemma: fig8 annihilator} can be made into the homogeneous one 
$$R(t,\sm,\sl)J_{\sc,n} = (\sl-1)B(t,\sm)^{-1}Q(t,\sm,\sl)(\sm^r\sl+t^{-2r}\sm^{-r})J_{\sc,n} = 0.$$ 
Assuming $R(t,\sm,\sl)$ is the recurrence polynomial of $C$, we can check the $AJ$-conjecture by evaluating at $t = -1$. We can directly compute the $A$-polynomial of $C$ from equation (\ref{A-poly of C}):
\begin{align*}
A_\sc(\sm,\sl) &= (\sl-1)(\sm^{2r}\sl+1)(-\sl + 2 \sl \sm^4 + 3 \sl \sm^8 - 2 \sl \sm^{12} + \sm^{16} - 6 \sl \sm^{16} + \sl^2 \sm^{16} - 2 \sl \sm^{20} \\
&\qquad + 3 \sl \sm^{24} + 2 \sl \sm^{28} - \sl \sm^{32}).\end{align*}
And so
\begin{equation} \label{eq: s=2 AJ check}
\begin{array}{ll}
R(-1,\sm,\sl)= & (-1 + \sm)^3 (1 + \sm)^3 (1 + \sm^2)^3 (1 + \sm^4)^3 (1 - \sm^4 - 2 \sm^8 - \sm^{12} + \sm^{16})\\
&\times B(-1,\sm)^{-1} (\sl-1)(\sm^r\sl+\sm^{-r})\\
&\times (\sl - 2 \sl \sm^4 - 3 \sl \sm^8 + 2 \sl \sm^{12} - \sm^{16} + 6 \sl \sm^{16} - \sl^2 \sm^{16} + 2 \sl \sm^{20} - 3 \sl \sm^{24}\\
& \qquad - 2 \sl \sm^{28} + \sl \sm^{32})\end{array}
\end{equation}
which, up to a factor of an element in $\complex(\sm)$, is equal to the $A$-polynomial of $C$.
\subsection{Minimal degree of the recurrence relation}
In this section, we prove that the operator $R(t,\sm,\sl)$ given in the previous section has minimal $\sl$-degree by showing that no annihilator of $J_{\sc,n}$ has $\sl$-degree less than 4. 

By Lemma \ref{lemma: L>2}, a recurrence relation for $J_{\sc,n}$ has $\sl$-degree at least 2.

Suppose the recurrence polynomial of $C$ has $\sl$-degree 2 or 3. Then there are relatively prime Laurent polynomials $D_0, \ldots, D_3$ in $t$ and $\sm$ ($D_3$ possibly 0) such that
$$ D_3 J_{\sc,n+3} + D_2 J_{\sc,n+2} + D_1 J_{\sc,n+1} + D_0 J_{\sc,n} = 0.$$
Using equation (\ref{eq: s=2 J_C(n+1)}) to reduce $J_{\sc,n+2}$ and $J_{\sc,n+3}$,
\begin{align*}
0 &= D_3 J_{\sc,n+3} + D_2 J_{\sc,n+2} + D_1 J_{\sc,n+1} + D_0 J_{\sc,n}\\
&= D_3 (-t^{-4r(n+2)-2r}J_{\sc,n+2}+ t^{-2r(n+2)}J_{E,2n+5}) + D_2 J_{\sc,n+2} + D_1 J_{\sc,n+1} + D_0 J_{\sc,n}\\
&=  D_3t^{-2rn-4r}J_{\se,2n+5} 
+ (D_2-D_3 t^{-4rn-10r}) (-t^{-4r(n+1)-2r}J_{\sc,n+1}+ t^{-2r(n+1)}J_{\se,2n+3})
\\&\quad + D_1 J_{\sc,n+1} + D_0 J_{\sc,n}\\
&=  D_3t^{-2rn-4r}J_{\se,2n+5} + (D_2t^{-2rn-2r}-D_3 t^{-6rn-12r})J_{\se,2n+3}\\
&\quad + (D_1-D_2t^{-4rn-6r}+D_3 t^{-8rn-16r}) (-t^{-4rn-2r}J_{\sc,n}+ t^{-2rn}J_{\se,2n+1}) + D_0 J_{\sc,n}\\
&=  D_3t^{-2rn-4r}J_{\se,2n+5} + (D_2t^{-2rn-2r}-D_3 t^{-6rn-12r})J_{\se,2n+3}\\
&\quad + (D_1t^{-2rn}-D_2t^{-6rn-6r}+D_3 t^{-10rn-16r})J_{\se,2n+1}\\
&\quad+ (D_0-D_1t^{-4rn-2r}+D_2t^{-8rn-8r}-D_3 t^{-12rn-18r})J_{\sc,n},
\end{align*}
and since $Q_2 J_{\se,2n+5} + Q_1 J_{\se,2n+3} + Q_0 J_{\se,2n+1} = B$, we have $J_{\se,2n+5} = \frac{B}{Q_2} - \frac{Q_1}{Q_2} J_{\se,2n+3} - \frac{Q_0}{Q_2} J_{\se,2n+1}$, and so
\begin{align*}
0 &=  D_3t^{-2rn-4r}(\frac{B}{Q_2} - \frac{Q_1}{Q_2} J_{\se,2n+3} -\frac{Q_0}{Q_2} J_{\se,2n+1}) + (D_2t^{-2rn-2r}-D_3 t^{-6rn-12r})J_{\se,2n+3}\\
&\quad + (D_1t^{-2rn}-D_2t^{-6rn-6r}+D_3 t^{-10rn-16r})J_{\se,2n+1}\\
&\quad+(D_0-D_1t^{-4rn-2r}+D_2t^{-8rn-8r}-D_3 t^{-12rn-18r})J_{\sc,n}\\
&=  D_3\frac{B}{Q_2}t^{-2rn-4r}  +
(D_2t^{-2rn-2r}-D_3 t^{-6rn-12r}- D_3 \frac{Q_1}{Q_2} t^{-2rn-4r})J_{\se,2n+3} \\
&\quad + (D_1t^{-2rn}-D_2t^{-6rn-6r}+D_3 t^{-10rn-16r}- D_3\frac{Q_0}{Q_2}t^{-2rn-4r})J_{\se,2n+1}\\
&\quad +(D_0-D_1t^{-4rn-2r}+D_2t^{-8rn-8r}-D_3 t^{-12rn-18r})J_{\sc,n},
\end{align*}
and multiplying everything by $Q_2$, we have
$$0 = D_0' + D_1' J_{\se,2n+3} + D_2' J_{\se,2n+1} + D_3' J_{\sc,n}$$
where the $D_i'$ are Laurent polynomials in $t$ and $\sm$ given by
\begin{align*}
D_0' &= B D_3 t^{-2rn-4r},\\
D_1' &= Q_2 D_2 t^{-2rn-2r}-Q_2 D_3 t^{-6rn-12r}- Q_1 D_3 t^{-2rn-4r},\\
D_2' &= Q_2 D_1 t^{-2rn}-Q_2 D_2 t^{-6rn-6r}+Q_2 D_3 t^{-10rn-16r}- Q_0 D_3 t^{-2rn-4r},\\
D_3' &= Q_2 D_0- Q_2 D_1 t^{-4rn-2r}+Q_2 D_2 t^{-8rn-8r}-Q_2 D_3 t^{-12rn-18r}.
\end{align*}
Notice that if $D_0' = D_1' = D_2' = D_3' = 0$, then it follows that $D_0 = D_1 = D_2 = D_3 = 0$ as well. If the recurrence polynomial has $\sl$-degree 2, then $D_3 = 0$ and thus $D_0' = 0$. The following lemma rules out this possibility.

\begin{lemma} \label{lemma: s=2 not degree 2}
When $r > 8$ or $r < -8$, if $D_0' = 0$, then $D_i' = 0$ for $i = 1,2,3$ as well.
\end{lemma}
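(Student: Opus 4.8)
The approach is to reduce the hypothesis to the single condition $D_3=0$, then to peel off the three terms of the master relation one at a time: first $D_3'$ by a comparison of extreme $t$-degrees, and then $D_1',D_2'$ by invoking Lemma \ref{lemma: L>2}. Since $D_0' = B D_3\, t^{-2rn-4r}$ and $B=B(t,\sm)$ is nonzero by Lemma \ref{lemma: fig8 annihilator}, the assumption $D_0'=0$ forces $D_3=0$. Substituting $D_3=0$ collapses the coefficients to $D_1' = Q_2 D_2\, t^{-2rn-2r}$, $D_2' = Q_2 D_1\, t^{-2rn} - Q_2 D_2\, t^{-6rn-6r}$, and $D_3' = Q_2 D_0 - Q_2 D_1\, t^{-4rn-2r} + Q_2 D_2\, t^{-8rn-8r}$, while the master relation becomes $0 = D_1' J_{\se,2n+3} + D_2' J_{\se,2n+1} + D_3' J_{\sc,n}$. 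Observing that every exponent $t^{-2rn}, t^{-4rn}, \dots$ is a power of $\sm=t^{2n}$, each $D_i'$ is a genuine Laurent polynomial in $t$ and $\sm$, so it now suffices to prove $D_1'=D_2'=D_3'=0$.

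To isolate $D_3'$ I would compare extreme degrees in $t$ as functions of $n$. Because each $D_i'$ is Laurent in $t$ and $\sm$, both $\hbar[D_i'(t,t^{2n})]$ and $\ell[D_i'(t,t^{2n})]$ are eventually linear in $n$. By Lemma \ref{lemma: E Degree} the two figure-eight terms satisfy $\hbar[J_{\se,2n+1}],\hbar[J_{\se,2n+3}] \sim 16n^2$ and $\ell \sim -16n^2$, whereas Lemma \ref{lemma: C Degree} with $s=2$ gives $\ell[J_{\sc,n}] = -2rn^2 + 2r$ when $r>8$ (leading coefficient $-2r<-16$) and $\hbar[J_{\sc,n}] = -2r(n^2-1)$ when $r<-8$ (leading coefficient $-2r>16$). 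Using additivity of $\hbar$ and $\ell$ over products, the extreme degree of $D_3' J_{\sc,n}$ has leading $n^2$-coefficient $-2r$, which for $r>8$ lies strictly below the lowest degrees of both figure-eight terms, and for $r<-8$ lies strictly above their highest degrees. Hence if $D_3'\neq 0$, then for all large $n$ the extreme-degree monomial of the sum comes solely from $D_3' J_{\sc,n}$ and cannot be cancelled, contradicting that the sum vanishes. Therefore $D_3'=0$.

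With $D_3'=0$ the relation reduces to $0 = D_1' J_{\se,2n+3} + D_2' J_{\se,2n+1}$. Writing $g(n):=J_{\se,2n+1}$, so that $g(n+1)=J_{\se,2n+3}$, this is the order-$\le 1$ recurrence $D_1'(t,t^{2n})\,g(n+1) + D_2'(t,t^{2n})\,g(n) = 0$. I would then verify that $g$ satisfies the three hypotheses of Lemma \ref{lemma: L>2}: the symmetry $g(-n) = J_{\se,-(2n-1)} = -J_{\se,2n-1} = -g(n-1)$ holds (with $c=-1$, $k=-1$); $g$ obeys a nontrivial homogeneous recurrence obtained from the inhomogeneous one of Lemma \ref{lemma: fig8 annihilator}; and $\hbar[g(n)]-\ell[g(n)] = 32n^2+24n$ is not linear in $n$. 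Lemma \ref{lemma: L>2} then rules out any nonzero recurrence of order at most $1$, forcing $D_1'=D_2'=0$ and completing the argument.

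The only delicate point is the degree bookkeeping of the second paragraph: one must confirm that the linear-in-$n$ shifts contributed by the coefficients $D_i'$ are dominated by the quadratic-in-$n$ extreme degrees of the colored Jones polynomials, so that the comparison is governed purely by the leading $n^2$-coefficients $16$ and $-2r$. The threshold at which $J_{\sc,n}$ overtakes the figure-eight terms is exactly $|r|>8$, that is $|r|>4s$ at $s=2$, which is precisely the hypothesis of the lemma and matches the bound in Theorem \ref{main result}. Everything else reduces to the substitution $D_3=0$ and a routine check of the hypotheses of Lemma \ref{lemma: L>2}.
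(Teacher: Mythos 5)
Your proof is correct and takes essentially the same route as the paper: a comparison of extreme $t$-degrees (leading $n^2$-coefficient $-2r$ versus $\mp 16$, which separates precisely when $|r|>8$) forces $D_3'=0$, and then Lemma \ref{lemma: L>2} applied to $J_{\se,2n+1}$ (with the symmetry $J_{\se,-2n+1}=-J_{\se,2n-1}$ and quadratic breadth) kills $D_1'$ and $D_2'$. Your preliminary step of deducing $D_3=0$ from $D_0'=0$ and $B\neq 0$ and substituting it into the $D_i'$ is a harmless simplification that the paper skips, since its argument works with the $D_i'$ directly.
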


\begin{proof}
Suppose $D_3' \neq 0$ and $r > 8$. Then the lowest degree in $t$ of $D_3' J_{\sc,n}$ is $\ell[D_3'] + \ell[J_{\sc,n}]$. This term must vanish in the sum, so it must be canceled by another nonzero term, hence there must be another $D_i' \neq 0$. Then we must have
$$ \ell[D_3']+\ell[J_{\sc,n}] = \ell[D_1' J_{\se,2n+3} + D_2' J_{\se,2n+1}] \geq \min( \ell[D_1']+\ell[J_{\se,2n+3}], \ell[D_2']+\ell[J_{\se,2n+1}] )$$
due to possible cancellation if $D_1'J_{\se,2n+3}$ and $D_2' J_{\se,2n+1}$ have the same lowest degree, and we consider $\ell[0] = \infty$.

For sufficiently large $n$, $\ell[D_i']$ is a linear function in $n$. By Lemma \ref{lemma: E Degree}, we have $\ell[J_{\se,2n+1}] = -16n^2-12n$ and $\ell[J_{\se,2n+3}] = -16n^2-48n-28$, and by Lemma \ref{lemma: C Degree}, we know $\ell[J_{\sc,n}] = -2rn^2+2r$. 

Suppose $\min( \ell[D_1']+\ell[J_{\se,2n+3}], \ell[D_2']+\ell[J_{\se,2n+1}] ) = \ell[D_2'] +\ell[J_{\se,2n+1}] $. Then for large enough $n$,
\begin{align*}
\ell[D_3']+\ell[J_{\sc,n}] &\geq \ell[D_2'] + \ell[J_{\se,2n+1}], \text{ so}\\
\ell[D_3'] - \ell[D_2'] &\geq \ell[J_{\se,2n+1}]-\ell[J_{\sc,n}] \\
&= -16n^2-12n-(-2rn^2+2r)\\
&= (-16+2r)n^2-12n-2r,
\end{align*}
and since $r > 8$, $-16+2r>0$, so since the right hand side is quadratic in $n$, it will eventually be larger than the left hand side, which is only linear in $n$. This is a contradiction.

Likewise, if $\min( \ell[D_1']+\ell[J_{\se,2n+3}], \ell[D_2']+\ell[J_{\se,2n+1}] ) = \ell[D_1'] +\ell[J_{\se,2n+3}]$, we reach the same contradiction and conclude that $D_3' = 0$. Similarly, if $r < -8$, we look at $\hbar[D_3' J_{\sc,n}]$ and also conclude $D_3' = 0$.

Now that $D_3' = 0$, we have $0 = D_1' J_{\se,2n+3} + D_2' J_{\se,2n+1}$. Suppose $D_1' \neq 0$. Then $D_2' \neq 0$ as well, since $J_{\se,2n+3}$ is not the zero function. Then we have a first order homogeneous recurrence relation of $J_{\se,2n+1}$. If we can show that $J_{\se,2n+1}$ satisfies the hypotheses of Lemma \ref{lemma: L>2}, we will arrive at a contradiction. We have some recurrence relation for $J_{\se,2n+1}$ and it is easy to see that $\hbar[J_{\se,2n+1}] - \ell[J_{\se,2n+1}]$ is quadratic in $n$. Recalling that $J_{\se,-n} = -J_{\se,n}$, we have
$$J_{\se,-2n+1} = -J_{\se,2n-1} = -J_{\se,2n+1-2},$$
so by Lemma \ref{lemma: L>2}, $J_{\se,2n+1}$ cannot have a first order homogeneous recurrence relation. Therefore, $D_1' = D_2' = D_3' = 0$, as needed.
\end{proof}

We now know that the recurrence polynomial of $C$ does not have degree 2.
\begin{lemma} \label{lemma: s=2 not degree 3}
When $r > 8$ or $r < -8$, we have $D_i' = 0$ for $i = 0,1,2,3$.
\end{lemma}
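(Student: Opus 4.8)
The plan is to show that, for $r>8$ or $r<-8$, all four coefficients in the relation
$0 = D_0' + D_1' J_{\se,2n+3} + D_2' J_{\se,2n+1} + D_3' J_{\sc,n}$ derived above vanish. Since $D_0'=D_1'=D_2'=D_3'=0$ forces $D_0=D_1=D_2=D_3=0$, this contradicts the assumed degree-$3$ annihilator and finishes the case. I would proceed in three movements: first eliminate $D_3'$, then show that the new obstruction $D_0'$ must also vanish, and finally invoke Lemma \ref{lemma: s=2 not degree 2}.

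First I would show $D_3'=0$ by the degree-domination argument of Lemma \ref{lemma: s=2 not degree 2}. The only difference is the presence of $D_0'$, but it causes no trouble: since $D_0'=B\,D_3\,t^{-2rn-4r}$ is a fixed Laurent polynomial evaluated at $\sm=t^{2n}$ times a monomial, both $\hbar[D_0']$ and $\ell[D_0']$ grow only linearly in $n$, whereas by Lemma \ref{lemma: C Degree} the term $D_3'J_{\sc,n}$ has $\ell[J_{\sc,n}]=-2rn^2+2r$ for $r>8$, a quadratic with leading coefficient $-2r<-16$. Hence if $D_3'\neq 0$ its lowest-degree contribution lies strictly below those of $D_0'$, $D_1'J_{\se,2n+3}$ and $D_2'J_{\se,2n+1}$ for large $n$ and cannot be cancelled, exactly as before; for $r<-8$ one argues symmetrically with highest degrees. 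This leaves $0 = D_0' + D_1' J_{\se,2n+3} + D_2' J_{\se,2n+1}$.

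The crux is to show $D_0'=0$, i.e.\ $D_3=0$ (recall $B\neq 0$). Suppose instead $D_3\neq 0$. The key observation is that the inhomogeneous term here, $-D_0'=-D_3t^{-2rn-4r}B$, is a nonzero multiple of the very function $B$ that is the inhomogeneous term of the order-$2$ recurrence $Q$ for $J_{\se,2n+1}$ from Lemma \ref{lemma: fig8 annihilator}. Writing $g(n)=J_{\se,2n+1}$, I would solve $0=D_0'+D_1'g(n+1)+D_2'g(n)$ for $B$ and substitute into $Q_2 g(n+2)+Q_1 g(n+1)+Q_0 g(n)=B$, clearing the common factor; this eliminates the inhomogeneous term and yields the homogeneous relation
$$Q_2D_3t^{-2rn-4r}\,g(n+2)+\bigl(Q_1D_3t^{-2rn-4r}+D_1'\bigr)g(n+1)+\bigl(Q_0D_3t^{-2rn-4r}+D_2'\bigr)g(n)=0,$$
whose leading coefficient $Q_2D_3t^{-2rn-4r}$ is nonzero since $Q_2\neq 0$ and $D_3\neq 0$. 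Thus $J_{\se,2n+1}$ would satisfy a nontrivial order-$2$ homogeneous recurrence. I expect this to be impossible, and ruling it out is the main obstacle: I would argue that $J_{\se,2n+1}$ has minimal homogeneous recurrence order $3$, inherited from the order-$3$ recurrence of the figure eight knot (equivalently, that the inhomogeneous recurrence $Q$ is already minimal). This is exactly the statement that excludes the potential cancellation between $D_1'J_{\se,2n+3}$ and $D_2'J_{\se,2n+1}$ that a naive breadth count cannot rule out, and establishing it rigorously — through a growth/symmetry argument refining Lemma \ref{lemma: L>2} and using the amphichiral symmetry $J_{\se,-2n+1}=-J_{\se,2(n-1)+1}$ — is the delicate point. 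Granting it, the displayed relation must be trivial, forcing $D_3=0$ and hence $D_0'=0$.

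Finally, with $D_0'=0$ in hand, Lemma \ref{lemma: s=2 not degree 2} applies directly and gives $D_1'=D_2'=D_3'=0$; together with $D_0'=0$ this yields $D_i'=0$ for all $i$, as required. The case $r<-8$ is handled by the same argument with the roles of highest and lowest degrees interchanged.
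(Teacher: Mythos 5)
Your first step (ruling out $D_3'$) matches the paper exactly, and your reduction of the problem to the case $D_0'\neq 0$ is also how the paper proceeds. But the heart of the lemma is precisely that case, and there your argument has a genuine gap: you reduce everything to the claim that $J_{\se,2n+1}$ admits no nontrivial order-$2$ homogeneous recurrence, and then you explicitly defer its proof ("Granting it\dots"). That claim is not a routine refinement of Lemma \ref{lemma: L>2}. The breadth/growth argument in that lemma works only at order $1$, where the recurrence forces $f(n+1)$ to be a ratio of two coefficient polynomials times $f(n)$; at order $2$ the possibility of cancellation between the two terms $D_1'J_{\se,2n+3}$ and $D_2'J_{\se,2n+1}$ destroys any naive degree count, and the symmetry $J_{\se,-2n+1}=-J_{\se,2(n-1)+1}$ is perfectly compatible with order-$2$ recurrences of palindromic type (the substitution $n\mapsto -n$ just produces a second degree-$2$ annihilator proportional to the first, which yields consistency conditions rather than a contradiction). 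So the tool you propose cannot close the gap, and no other proof of the claim is supplied.

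The paper avoids this issue by moving the contradiction from $J_{\se,2n+1}$ back to $J_{\sc,n}$ and working at $t=-1$. Concretely: from $0=D_0'+D_1'J_{\se,2n+3}+D_2'J_{\se,2n+1}$ with $D_0'\neq 0$ it builds (via the standard $(\sl-1)D_0'^{-1}$ homogenization and the relation $J_{\se,2n+1}=(\sm^r\sl+t^{-2r}\sm^{-r})J_{\sc,n}$) an explicit degree-$3$ homogeneous annihilator $S(t,\sm,\sl)$ of $J_{\sc,n}$. Under the standing assumption that the recurrence polynomial of $C$ has $\sl$-degree $3$, $S$ must be that recurrence polynomial up to a factor in $t,\sm$, hence must divide the known degree-$4$ annihilator $R(t,\sm,\sl)$; evaluating at $t=-1$ (after carefully factoring out powers of $(1+t)$ from the $D_i'$, which produces three cases) forces a linear factor, or $\sl$, or a degree-count mismatch against the explicit factorization of $R(-1,\sm,\sl)$ in equation (\ref{eq: s=2 AJ check}), whose quadratic factor is irreducible over $\complex(\sm)$. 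The decisive input is thus an algebraic irreducibility statement at $t=-1$ (essentially about the $A$-polynomial factor), not a growth or symmetry property of the sequence; this is the ingredient your proposal is missing and would need to supply to make your route work.
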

\begin{proof}
The proof that $D_3' = 0$ is the same as in the proof of Lemma \ref{lemma: s=2 not degree 2}, noting that the polynomial $D_0'$ has lowest degree in $t$ which is only linear in $n$, so $\ell[D_3' J_{\sc,n}] \neq \ell[D_0']$ since $\ell[J_{\sc,n}]$ is quadratic in $n$.

Since $D_3' = 0$, we have $0 = D_0' + D_1' J_{\se,2n+3} + D_2' J_{\se,2n+1}$. If $D_0' = 0$, then we are done by Lemma \ref{lemma: s=2 not degree 2}, so assume for the sake of contradiction that $D_0' \neq 0$. Then we have an inhomogeneous recurrence relation of $J_{\se,2n+1}$ of $\sl$-degree 1. This gives rise to a homogeneous recurrence of $\sl$-degree 2
\begin{align*}
0 &= (\sl - 1) D_0'^{-1}(D_1'\sl+D_2')J_{\se,2n+1}\\
&= (\sl D_0'^{-1} D_1' \sl + \sl D_0'^{-1} D_2' -D_0'^{-1}D_1'\sl - D_0'^{-1} D_2')J_{\se,2n+1}\\
&= ( D_0'^{-1}(t,t^2\sm) D_1'(t,t^2\sm) \sl^2 +  (D_0'^{-1}(t,t^2\sm) D_2'(t,t^2\sm) -D_0'^{-1}(t,\sm)D_1'(t,\sm))\sl \\
&\quad - D_0'^{-1}(t,\sm) D_2'(t,\sm))J_{\se,2n+1},
\end{align*}
and multiplying on the left by $D_0'(t,\sm) D_0'(t,t^2\sm)$ gives
$$0 = ( D_0'(t,\sm) D_1'(t,t^2\sm) \sl^2 +  (D_0'(t,\sm) D_2'(t,t^2\sm) -D_0'(t,t^2\sm)D_1'(t,\sm))\sl - D_0'(t,t^2\sm) D_2'(t,\sm))J_{\se,2n+1}.$$
Recalling that $J_{\se,2n+1} = (\sm^r\sl+t^{-2r}\sm^{-r})J_{\sc,n}$, we get a homogeneous recurrence of $J_{\sc,n}$ with the annihilator
\begin{align*}
S(t,\sm,\sl) &:= ( D_0'(t,\sm) D_1'(t,t^2\sm) \sl^2 +  (D_0'(t,\sm) D_2'(t,t^2\sm) -D_0'(t,t^2\sm)D_1'(t,\sm))\sl \\
&\qquad - D_0'(t,t^2\sm) D_2'(t,\sm)) (\sm^r\sl+t^{-2r}\sm^{-r}).
\end{align*}
We want to evaluate at $t=-1$, but it is possible that some $D_i'(-1,\sm) = 0$. Then for $i = 0,1,2$, we have $D_i'(t,\sm) = (1+t)^{k_i} D_i''(t,\sm)$ for some Laurent polynomial $D_i''(t,\sm)$ and minimal $k_i \geq 0$ such that $D_i''(-1,\sm) \neq 0$. Hence
\begin{align*}
0 &= ( (1+t)^{k_0+k_1}D_0''(t,\sm) D_1''(t,t^2\sm) \sl^2 +  (1+t)^{k_0+k_2}D_0''(t,\sm) D_2''(t,t^2\sm)\sl \\
  &\qquad -(1+t)^{k_0+k_1}D_0''(t,t^2\sm)D_1''(t,\sm)\sl - (1+t)^{k_0+k_2}D_0''(t,t^2\sm) D_2''(t,\sm))(\sm^r\sl+t^{-2r}\sm^{-r}) J_{\sc,n}\\
&=(1+t)^{k_0}( (1+t)^{k_1}D_0''(t,\sm) D_1''(t,t^2\sm) \sl^2 +  (1+t)^{k_2}D_0''(t,\sm) D_2''(t,t^2\sm)\sl \\
&\qquad -(1+t)^{k_1}D_0''(t,t^2\sm)D_1''(t,\sm)\sl - (1+t)^{k_2}D_0''(t,t^2\sm) D_2''(t,\sm))(\sm^r\sl+t^{-2r}\sm^{-r}) J_{\sc,n},
\end{align*}
and we can cancel $(1+t)^{k_0}$ and likewise any other common factors if $k_1 > 0$ and $k_2 > 0$. Therefore we can assume without loss of generality that $k_0 = 0$ and at least one of $k_1$ or $k_2 = 0$. We check the cases.

\begin{case}
$k_1 = 0$ and $k_2 = 0$: 
\end{case}
Evaluating the annihilator $S$ at $t = -1$, we have
\begin{align*}
S(-1,\sm,\sl) &= ( D_0''(-1,\sm) D_1''(-1,\sm) \sl^2 +  D_0''(-1,\sm) D_2''(-1,\sm)\sl \\
&\qquad -D_0''(-1,\sm)D_1''(-1,\sm)\sl - D_0''(-1,\sm) D_2''(-1,\sm))(\sm^r\sl+\sm^{-r})\\
&= D_0''(-1,\sm)(\sl-1)(\sm^r \sl + \sm^{-r}) (D_1''(-1,\sm) \sl + D_2''(-1,\sm)).
\end{align*}
Since the recurrence polynomial of $C$ has $\sl$-degree 3 by assumption, we know that $S(t,\sm,\sl)$ is the recurrence polynomial up to a factor of a Laurent polynomial in $t$ and $\sm$, so over the field $\complex(\sm)$, $S(-1,\sm,\sl)$ must divide our degree 4 homogenous annihilator $R(t,\sm,\sl)$ of $J_{\sc,n}$ valued at $t = -1$ found in equation \ref{eq: s=2 AJ check}. This means we must have $D_1''(-1,\sm) \sl + D_2''(-1,\sm)$ divides 
$$\sl - 2 \sl \sm^4 - 3 \sl \sm^8 + 2 \sl \sm^{12} - \sm^{16} + 6 \sl \sm^{16} -\sl^2 \sm^{16} + 2 \sl \sm^{20} - 3 \sl \sm^{24} - 2 \sl \sm^{28} + \sl \sm^{32}.$$
But this is irreducible over $\complex(\sm)$, so $D_1''(-1,\sm) \sl + D_2''(-1,\sm) = 0$, thus $D_1''(-1,\sm) = 0$, which is a contradiction.

\begin{case}
$k_1 = 0$ and $k_2 > 0$:
\end{case}
Here, we have
\begin{align*}
S(-1,\sm,\sl) &= ( D_0''(-1,\sm) D_1''(-1,\sm) \sl^2 -D_0''(-1,\sm)D_1''(-1,\sm)\sl)(\sm^r\sl+\sm^{-r})\\
&= D_0''(-1,\sm)D_1''(-1,\sm) (\sl) (\sl-1) (\sm^r\sl+\sm^{-r}),
\end{align*}
and so $\sl$ must divide the irreducible factor of our degree 4 annihilator of $J_{\sc,n}$, which is a contradiction.

\begin{case}
$k_1 > 0$ and $k_2 = 0$:
\end{case}
This time,
$$S(-1,\sm,\sl) = D_0''(-1,\sm) D_2''(-1,\sm) (\sl - 1) (\sm^r\sl+\sm^{-r})$$
which has $\sl$-degree 2. Since $S(t,\sm,\sl)$ is, up to a factor of a Laurent polynomial in $t$ and $\sm$, the recurrence polynomial of $C$,  there must be some $P(t,\sm,\sl)$ in $\wT$ such that $R(t,\sm,\sl) = P(t,\sm,\sl) S(t,\sm,\sl)$, so $P$ has $\sl$-degree 1. However, $R(-1,\sm,\sl) = P(-1,\sm,\sl) S(-1,\sm,\sl)$, where $R$ has $\sl$-degree 4 while the right-hand side of the equation has $\sl$-degree 3. This is a contradiction.

In each case, we arrive at a contradiction, and conclude that $D_0' = D_1' = D_2' = D_3' = 0$.
\end{proof}
Therefore, our annihilator $R(t,\sm,\sl)$ is of minimal degree and hence the recurrence polynomial of $J_{\sc,n}$ up to normalization when $r > 8$ or $r < -8$.

\section{Case: $s > 2$}
We prove the $s > 2$ case of Theorem \ref{main result} in the same three steps as before.
\subsection{The Annihilator} \label{subsec: s>2 annihilator}
Recall equation (\ref{eq: s>2 peel}):
$$(t^{2rs}\sm^{rs} \sl^2-t^{-2rs}\sm^{-rs})J_{\sc,n} = t^{2r}\sm^{r}J_{\se,s(n+1)+1} - t^{-2r}\sm^{-r}J_{\se,s(n+1)-1}.$$
Define the sequence $T_n$ to be the right hand side of this equation:
\begin{equation} \label{eq: T_n definition}
T_n = t^{2r}\sm^{r}J_{\se,s(n+1)+1} - t^{-2r}\sm^{-r}J_{\se,s(n+1)-1}.
\end{equation}
Then to find a recurrence relation of $J_{\sc,n}$, it is enough to find an inhomogenous recurrence relation for $T_n$.
\begin{prop}\label{prop: s>2 annihilator}
There exists a polynomial $Q(t,\sm,\sl)$ of $\sl$-degree 2 which satisfies $Q(t,\sm,\sl) T_n = B(t,\sm)$ for some $B(t,\sm) \in \complex(t, \sm)$.
\end{prop}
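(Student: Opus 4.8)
The plan is to mimic the construction used in Lemma~\ref{lemma: fig8 annihilator}, but now applied to the sequence $T_n$ rather than to $J_{\se,2n+1}$ directly. Recall that $T_n$ is a fixed linear combination of $J_{\se,s(n+1)+1}$ and $J_{\se,s(n+1)-1}$. Each of these two subsequences of $J_{\se,m}$ is obtained by sampling the index $m$ along an arithmetic progression of common difference $s$ (namely $m \equiv \pm 1 \pmod{s}$, shifted), so I first want to produce an inhomogeneous recurrence of $\sl$-degree~$2$ for $T_n$ by taking suitable shifts of the degree-$2$ inhomogeneous relation $\tilde\alpha_\se(t,\sm,\sl)J_{\se,n}=b(t,\sm)$ from equation~(\ref{eq: E Inhomogeneous}) and eliminating the ``intermediate'' colored Jones values that are not of the form $J_{\se,s(n+1)\pm 1}$.

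\textbf{Setting up the elimination.} Writing $\sm=t^{2n}$ as in the earlier lemma, I substitute the indices $s(n+1)-1, s(n+1), \dots, s(n+3)+1$ into the figure-eight relation~(\ref{eq: E Inhomogeneous}) to obtain a finite family of inhomogeneous relations among the values $J_{\se,m}$ for $m$ ranging over this block. The target is a relation
\[
Q_2(t,\sm)\,T_{n+2} + Q_1(t,\sm)\,T_{n+1} + Q_0(t,\sm)\,T_n = B(t,\sm),
\]
so I look for rational coefficients $c_j$ multiplying the shifted figure-eight relations whose linear combination has nonzero contribution only on the indices $s(n+1)\pm 1$, $s(n+2)\pm 1$, $s(n+3)\pm 1$ (i.e.\ exactly the terms appearing in $T_n, T_{n+1}, T_{n+2}$), with all other $J_{\se,m}$ coefficients forced to vanish. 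As in Lemma~\ref{lemma: fig8 annihilator}, this is a linear-algebra problem: set up the augmented matrix whose columns are the unknown multipliers $c_j$ and the unknown $Q_i$, row-reduce, and read off that the solution space is one-dimensional provided the relevant $P_i(t,\cdot)$ factors are nonzero. Clearing denominators then yields genuine Laurent-polynomial coefficients $Q_0,Q_1,Q_2$ and a right-hand side $B(t,\sm)=\sum_j c_j\,b(t,\cdots)$.

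\textbf{The main obstacle} is twofold. First, because $T_n$ couples \emph{two} offset subsequences ($+1$ and $-1$ residues mod $s$) rather than a single residue class, the elimination must simultaneously kill the ``stray'' indices coming from both families; for general $s>2$ the block of indices is longer than in the $s=2$ case, so I expect to need more shifted relations and a larger (but still explicit) matrix, and I must check that the system remains consistent and rank-deficient by exactly one. Second, and most importantly, I must verify that $B(t,\sm)\neq 0$: a priori the elimination could collapse the inhomogeneous terms to zero, leaving only a homogeneous relation. I would handle this exactly as before, by evaluating $\lim_{t\to-1}(t^2-t^{-2})B(t,\sm)$ and exhibiting it as a nonzero element of $\complex(\sm)$; the factor $(t^2-t^{-2})$ clears the denominator appearing in $b(t,\sm)$ from~(\ref{eq: E Inhomogeneous}), and the limit reduces to a manifestly nonzero product of cyclotomic-type factors in $\sm$.

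\textbf{Finally}, once $Q$ and the nonzero $B$ are in hand, the proposition is proved: combining $Q(t,\sm,\sl)T_n=B(t,\sm)$ with equation~(\ref{eq: s>2 peel}), which expresses $T_n=(t^{2rs}\sm^{rs}\sl^2-t^{-2rs}\sm^{-rs})J_{\sc,n}$, immediately gives an inhomogeneous recurrence of $\sl$-degree~$2$ for $T_n$, as claimed, and (after multiplication by $(\sl-1)B^{-1}$ as noted after equation~(\ref{eq: E Inhomogeneous})) a homogeneous annihilator of $J_{\sc,n}$ to be analyzed in the subsequent minimality and $AJ$-verification steps.
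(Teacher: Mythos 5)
Your overall strategy---eliminating the stray colored Jones values by taking a $\complex(t,\sm)$-linear combination of the $2s+1$ shifted copies of relation (\ref{eq: E Inhomogeneous}) and solving the resulting linear system for $(c_0,\ldots,c_{2s},Q_2,Q_1,Q_0)$---is exactly the paper's approach, but the two steps you defer are precisely where all the work lies, and as sketched neither goes through. First, the claim that the $(2s+3)\times(2s+4)$ system is ``rank-deficient by exactly one \ldots provided the relevant $P_i(t,\cdot)$ factors are nonzero'' is not something one can just read off a row reduction: unlike the explicit $5\times 7$ matrix of Lemma \ref{lemma: fig8 annihilator}, here the pivots are built by a length-$2s$ recursion (after the natural row operations one gets entries satisfying $x_i=\frac{P_2(-1,\sm^s)}{P_0(-1,\sm^s)}x_{i-2}-\frac{P_1(-1,\sm^s)}{P_0(-1,\sm^s)}x_{i-1}$), and what must be shown is that the final pivot $x_{2s+3}'$ is nonzero---a statement about that recursion, not about nonvanishing of the individual $P_i$'s. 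The paper proves it by introducing the top-$\sm$-degree valuation $\mu$, using $\mu(P_0(-1,\sm^s))=\mu(P_2(-1,\sm^s))=8s$ and $\mu(P_1(-1,\sm^s))=12s$, computing $\mu(x_i)$ inductively, and splitting into the cases $r<4s$ and $r>4s$ to see that the competing maxima can never cancel. Without this (or a substitute) you have neither a one-dimensional solution space nor the conclusion that the solution has $Q_2\neq 0$, i.e.\ that $Q$ has $\sl$-degree exactly $2$.

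Second, your plan to verify $B(t,\sm)\neq 0$ ``exactly as before,'' by evaluating $\lim_{t\to-1}(t^2-t^{-2})B(t,\sm)$ and exhibiting a manifestly nonzero product, cannot be carried out for general $s>2$: here $B=\sum_{k=0}^{2s}c_k\,b(t,t^{2(s-1+k)}\sm^s)$, where the $c_k$ are defined only as solutions of the linear system (by Cramer's rule they are ratios of size-$(2s+3)$ determinants depending on $r$ and $s$), so there is no closed-form expression whose limit you can display, and nothing forces the limit of this alternating sum to factor into cyclotomic-type pieces. The paper avoids the computation entirely with an indirect argument: if $B=0$, then $Q(t,\sm,\sl)(t^{2rs}\sm^{rs}\sl^2-t^{-2rs}\sm^{-rs})$ would be a homogeneous annihilator of $J_{\sc,n}$, so by \cite{Le} its evaluation at $t=-1$ must contain the factor $\sl-1$; since $\sm^{rs}\sl^2-\sm^{-rs}=\sm^{rs}(\sl-\sm^{-rs})(\sl+\sm^{rs})$ contributes no such factor (as $rs\neq 0$), $\sl-1$ would have to divide $Q(-1,\sm,\sl)$, contradicting the irreducibility of $Q(-1,\sm,\sl)$ established through the resultant identification in Proposition \ref{prop: Q=Res}. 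You need either this argument or a genuinely different proof of $B\neq 0$; the direct limit computation is only available in the $s=2$ case, where the coefficients are explicit.
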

\begin{proof}
Any second order inhomogeneous recurrence relation for $T_n$ looks like
\begin{align*}
B(t,\sm) &= \sum_{i=0}^2 Q_i(t,\sm) T_{n+i} \\
&= Q_0(t,\sm)(t^{2r}\sm^{r}J_{\se,s(n+1)+1} - t^{-2r}\sm^{-r}J_{\se,s(n+1)-1})\\
&\quad + Q_1(t,\sm)(t^{4r}\sm^{r}J_{\se,s(n+2)+1} - t^{-4r}\sm^{-r}J_{\se,s(n+2)-1})\\
&\quad + Q_2(t,\sm)(t^{6r}\sm^{r}J_{\se,s(n+3)+1} - t^{-6r}\sm^{-r}J_{\se,s(n+3)-1}).
\end{align*}
Recall that we have an inhomogeneous recurrence relation for $J_{\se,n+i}$ given in equation (\ref{eq: E Inhomogeneous}):
$$
\sum_{i=0}^2 P_i(t,t^{2n}) J_{\se,n+i} = b(t,t^{2n}).
$$
By substituting $s(n+1)-1+k$, with $0 \leq k \leq 2s$, for $n$ in equation (\ref{eq: E Inhomogeneous}), we shift the relation to
$$\sum_{i=0}^2 P_i(t,t^{2(s(n+1)-1+k)}) J_{\se,s(n+1)-1+k+i} = b(t,t^{2(s(n+1)-1+k)}),$$
or changing $t^{2n}$ to $\sm$,
$$\sum_{i=0}^2 P_i(t,t^{2(s-1+k)} \sm^s) J_{\se,s(n+1)-1+k+i} = b(t,t^{2(s-1+k)} \sm^s).$$
As in the $s=2$ case, we take a linear combination over $\complex(t,\sm)$ of these relations and aim to solve
\begin{equation}\label{eq: s>2 want to solve}
 \sum_{k=0}^{2s}c_k \sum_{i=0}^2 P_i(t,t^{2(s-1+k)}\sm^s) J_{\se,s(n+1)-1+k+i} = \sum_{i = 0}^2 Q_i(t,\sm) T_{n+i}
\end{equation}
for the unknown coefficients $c_0,\ldots, c_{2s}$ and $Q_0,Q_1,Q_2$. With these in hand, we will have
\begin{align*}
\sum_{i = 0}^2 Q_i(t,\sm) T_{n+i} &= \sum_{k=0}^{2s}c_k \sum_{i=0}^2 P_i(t,t^{2(s-1+k)}\sm^s) J_{\se,s(n+1)-1+k+i}\\
&= \sum_{k=0}^{2s}c_k b(t,t^{2(s-1+k)} \sm^s) \in \complex(t, \sm),
\end{align*}
so $\sum_{i = 0}^2 Q_i(t,\sm) \sl^i$ will be a polynomial giving a recurrence relation for $T_n$.

To solve equation (\ref{eq: s>2 want to solve}), we replace $T_{n+i}$ with its definition and see that this equation is equivalent to
\begin{align*} 
0 &= \sum_{k=0}^{2s}c_k \left(\sum_{i=0}^2 P_i(t,t^{2(s-1+k)}\sm^s) J_{\se,s(n+1)-1+k+i}\right) - \sum_{i = 0}^2 Q_i(t,\sm) T_{n+i} \\
&=  \sum_{k=0}^{2s}c_k \left(\sum_{i=0}^2 P_i(t,t^{2(s-1+k)}\sm^s) J_{\se,s(n+1)-1+k+i}\right) \\
&\qquad - \sum_{i = 0}^2 Q_i(t,\sm) (t^{(1+i)2r}\sm^{r}J_{\se,s(n+1+i)+1} - t^{-(1+i)2r}\sm^{-r}J_{\se,s(n+1+i)-1}).
\end{align*}
By setting the coefficients of each $J_{\se,s(n+1)-1+k}$ equal to zero, we obtain a linear system of equations over the field $\complex(t, \sm)$, with $2s+3$ equations (the indices range from $s(n+1)-1$ to $s(n+3)+1$) and $2s+4$ unknowns. We obtain a $(2s+3) \times (2s+4)$ matrix of coefficients, where we shorten the notation $P_i(t,\sm)$ to $P_i(\sm)$:
\begin{equation} \label{eq:The Matrix}
A =\resizebox{.9\textwidth}{!}{$\left[\begin{array}{ccccccc|ccc}
P_0(t^{2s-2}\sm^s) & 0 & 0 & \cdots & 0 &0 &0 & 0 & 0 & t^{-2r}\sm^{-r}\\
P_1(t^{2s-2}\sm^s) & P_0(t^{2s}\sm^s) & 0 & \cdots & 0 & 0 & 0& 0 & 0 & 0\\
P_2(t^{2s-2}\sm^s) & P_1(t^{2s}\sm^s) & P_0(t^{2s+2} \sm^s) & \cdots & 0 & 0 & 0& 0 & 0 & -t^{2r}\sm^{r}\\
\vdots &\vdots &\vdots &\ddots &\vdots & \vdots & \vdots & \vdots & \vdots & \vdots\\
0 & 0 & 0 & \cdots & P_2(t^{6s-6}\sm^s) & P_1(t^{6s-4} \sm^s) & P_0(t^{6s-2} \sm^s)& t^{-6r}\sm^{-r} & 0 & 0\\
0 & 0 & 0 & \cdots & 0 & P_2(t^{6s-4} \sm^s) & P_1(t^{6s-2} \sm^s)& 0 & 0 & 0\\
0 & 0 & 0 & \cdots & 0 & 0 & P_2(t^{6s-2} \sm^s) & -t^{6r}\sm^{r} & 0 & 0\\
\end{array}\right]$}\end{equation}
where the columns are arranged corresponding to the order $(c_0,c_1,\ldots,c_{2s},Q_2,Q_1,Q_0)$, the rows are arranged in order of increasing index of $J_{\se,s(n+1)-1+k}$, and the second-to-last column, corresponding to $Q_1$, contains $t^{-4r}\sm^{-r}$ and $-t^{4r}\sm^r$ in the $s+1$ and $s+3$ positions respectively and is zero everywhere else. We claim that this matrix has rank $2s+3$, which will guarantee that our polynomial has degree 2 rather than degree 1.

We wish to row-reduce $A$ enough to get a nonzero entry in the $(2s+3,2s+3)$ position. It is enough to do this setting $t = -1$. We can accomplish this in $2s+1$ steps, using the row operations
$$\mathcal{O}_i = \left\{ R_{i+1} \mapsto R_{i+1} - \frac{P_1(\sm^s)}{P_0(\sm^s)} R_i, R_{i+2} \mapsto R_{i+2} - \frac{P_2(\sm^s)}{P_0(\sm^s)}R_i \right\}$$
for $1 \leq i \leq 2s+1$. Performing these in succession leaves us with the following matrix (where we abbreviate $P_i(-1,\sm^s)$ to $P_i(\sm^s)$):
$$B = \resizebox{.9\textwidth}{!}{$\left[\begin{array}{ccccccc|ccc}
P_0(\sm^s) & 0 & 0 & \cdots & 0 &0 &0           & 0 & 0 & \sm^{-r}\\
0 & P_0(\sm^s) & 0 & \cdots & 0 & 0 & 0         & 0 & 0 & \frac{-P_1(\sm^s)}{P_0(\sm^s)} \sm^{r}\\
0 & 0 & P_0(\sm^s) & \cdots & 0 & 0 & 0         & 0 & 0 & -\sm^{r}-\frac{P_2(\sm^s)}{P_0(\sm^s)}\sm^{-r}\\
\vdots &\vdots &\vdots &\ddots &\vdots & \vdots & \vdots & \vdots & \vdots & \vdots\\
0 & 0 & 0 & \cdots & 0 & 0 & P_0(\sm^s)         & \sm^{-r} & x_{2s+1} & y_{2s+1}\\
0 & 0 & 0 & \cdots & 0 & 0 & 0                  & \frac{-P_1(\sm^s)}{P_0(\sm^s)} \sm^{-r} & x_{2s+2} & y_{2s+2}\\
0 & 0 & 0 & \cdots & 0 & 0 & 0                  & -\sm^{r}-\frac{P_2(\sm^s)}{P_0(\sm^s)}\sm^{-r} & x_{2s+3} & y_{2s+3}\\
\end{array}\right]$}$$
For some $x_i$'s and $y_i$'s, which we will discuss in a moment. It is easy to verify that $P_1(-1,\sm^s) \neq 0$, so we can further reduce this to
$$\resizebox{.95\textwidth}{!}{$\left[\begin{array}{ccccccc|ccc}
P_0(\sm^s) & 0 & 0 & \cdots & 0 &0 &0           & 0 & 0 & \sm^{-r}\\
0 & P_0(\sm^s) & 0 & \cdots & 0 & 0 & 0         & 0 & 0 & \frac{-P_1(\sm^s)}{P_0(\sm^s)} \sm^{-r}\\
0 & 0 & P_0(\sm^s) & \cdots & 0 & 0 & 0         & 0 & 0 & -\sm^{r}-\frac{P_2(\sm^s)}{P_0(\sm^s)}\sm^{-r}\\
\vdots &\vdots &\vdots &\ddots &\vdots & \vdots & \vdots & \vdots & \vdots & \vdots\\
0 & 0 & 0 & \cdots & 0 & 0 & P_0(\sm^s)         & 0 & x_{2s+1} + \frac{P_0(\sm^s)}{P_1(\sm^s)} x_{2s+2} & y_{2s+1}'\\
0 & 0 & 0 & \cdots & 0 & 0 & 0                  & \frac{-P_1(\sm^s)}{P_0(\sm^s)} \sm^{-r} & x_{2s+2} & y_{2s+2}\\
0 & 0 & 0 & \cdots & 0 & 0 & 0                  & 0 & x_{2s+3}' & y_{2s+3}'\\
\end{array}\right]$}$$
where $x_{2s+3}' = x_{2s+3} -\frac{P_0(\sm^s)}{P_1(\sm^s)}(\sm^{2r}+\frac{P_2(\sm^s)}{P_0(\sm^s)})x_{2s+2}$ (and similarly define $y_{2s+3}'$). We now claim $x_{2s+3}' \neq 0$, which implies that this matrix has rank $2s+3$.

In the matrix $B$ above, we denoted the entries of the $2s+2$ column as $x_1, \ldots, x_{2s+3}$. We have $x_i = 0$ for $1 \leq i \leq s$ and $x_{s+1} = \sm^{-r}$. Notice that due to the row operations, we have 
\begin{align*}
x_{s+2} &= \frac{-P_1(\sm^s)}{P_0(\sm^s)}\sm^{-r},\\
x_{s+3} &= -\sm^{r} + \frac{P_1(\sm^s)^2}{P_0(\sm^s)^2}\sm^{-r} - \frac{P_2(\sm^s)}{P_0(\sm^s)} \sm^{-r},\\
x_i &= \frac{P_2(\sm^s)}{P_0(\sm^s)}x_{i-2} - \frac{P_1(\sm^s)}{P_0(\sm^s)}x_{i-1}, \quad s+4 \leq i \leq 2s+3 
\end{align*}
For a nonzero Laurent polynomial $f \in \complex[t,\sm]^*$, define $\mu(f)$ to be the maximum degree in $\sm$ of $f$, and extend to rational functions $f/g \in \complex(t,\sm)^*$ by defining $\mu(f/g) = \mu(f) - \mu(g)$. This is well-defined and satisfies:
\begin{itemize}
\item $\mu(f \cdot g) = \mu(f) + \mu(g)$. 
\item $\mu(f+g) = \max(\mu(f),\mu(g))$ if $\mu(f) \neq \mu(g)$.
\end{itemize}
In particular, we have $\mu(P_0(1,\sm^s)) = \mu(P_2(1,\sm^s)) = 8s$ and $\mu(P_1(1,\sm^s)) = 12s$. We will show that $x_{2s+3}' \neq 0$ by showing $\mu(x_{2s+3}')$ is defined.

We have $\mu(x_{s+1}) = -r$, $\mu(x_{s+2}) = \mu(P_1(\sm^s)) - \mu(P_2(\sm^s)) + \mu(\sm^{-r}) = 4s-r$, $\mu(x_{s+3}) = \max(r,8s-r)$, and $\mu(x_i) = \max(\mu(x_{i-2}),4s+\mu(x_{i-1}))$. Let us examine the cases.

\begin{case} $r < 4s$: \end{case}
Since $r < 4s$, we have $\mu(x_{s+3}) = 8s-r$. Then $\mu(x_{s+4}) = \max(\mu(x_{s+2}), 4s+\mu(x_{s+3})) = 12s-r$, and by induction, $\mu(x_i) = (i-s-1)4s-r$ for $i \geq s+3$. Therefore
\begin{align*}
\mu(x_{2s+3}') &= \mu(x_{2s+3} -\frac{P_0(\sm^s)}{P_1(\sm^s)}(\sm^{2r}+\frac{P_2(\sm^s)}{P_0(\sm^s)})x_{2s+2})\\
&= \max(\mu(x_{2s+3}),\max(2r-4s+\mu(x_{2s+2}),-4s+\mu(x_{2s+2})))\\
&= \max((s+2)4s-r, 2r-4s+(s+1)4s-r, -4s+ (s+1)4s-r)\\
&= \max(4s^2+4s-r, 4s^2+r, 4s^2-r),
\end{align*}
which is defined because the entries cannot be equal. Therefore, $x_{2s+3}' \neq 0$.

\begin{case} $r > 4s$: \end{case}
Here, we have $\mu(x_{s+3}) = r$. Then $\mu(x_{s+4}) = \max(4s-r,4s+r) = 4s+r$ since $s > 0$. Then by induction, $\mu(x_{i}) = (i-s-3)4s+r$ for $i \geq s+3$. Therefore
\begin{align*}
\mu(x_{2s+3}') &= \mu(x_{2s+3} -\frac{P_0(\sm^s)}{P_1(\sm^s)}(\sm^{2r}+\frac{P_2(\sm^s)}{P_0(\sm^s)})x_{2s+2})\\
&= \max(\mu(x_{2s+3}),\max(2r-4s+\mu(x_{2s+2}),-4s+\mu(x_{2s+2})))\\
&= \max(4s^2+r, 2r-4s+(s-1)4s+r, -4s+ (s-1)4s+r)\\
&= \max(4s^2+r, 4s^2-8s-r, 4s^2-8s+r),
\end{align*}
which is defined because the entries cannot be equal. Therefore, $x_{2s+3}' \neq 0$.

We conclude that the matrix $A$ has rank $2s+3$ for all $s > 2$ and $r$ relatively prime to $s$. Therefore, we have a single free variable, say $Q_0$, which determines an annihilator for $T_n$. Let $Q(t,\sm,\sl)$ be the solution obtained this way such that its coefficients are relatively prime and in $\z[t^{\pm 1},\sm]$ and let $B(t,\sm) := \sum_{k=0}^{2s}c_k b(t,t^{2(s-1+k)} \sm^s)$.
\begin{exa}
We illustrate explicitly for the case $s = 3$. Setting $Q_0 = 1$, we have the matrix equation
$$\resizebox{\textwidth}{!}{$\begin{bmatrix}
P_0(t^4 \sm^3)  & 0 & 0 & 0 & 0&0 & 0&0 & 0 \\
 P_1(t^4 \sm^3) & P_0(t^6 \sm^3) & 0 & 0&0 & 0 & 0&0 & 0  \\
P_2(t^4 \sm^3)  & P_1(t^6 \sm^3) & P_0(t^8 \sm^3) &0& 0 & 0 & 0 & 0& 0  \\
 0 & P_2(t^6 \sm^3) & P_1(t^8 \sm^3) & P_0(t^{10} \sm^3) &0& 0 & 0&0  & t^{-4r} \sm^{-r} \\
 0 & 0 & P_2(t^8 \sm^3) & P_1(t^{10} \sm^3) & P_0(t^{12}\sm^3) &0& 0&0  & 0  \\
 0 & 0 & 0 & P_2(t^{10} \sm^3) & P_1(t^{12}\sm^3) & P_0(t^{14}\sm^3)&0&0  & -t^{4r} \sm^r  \\
 0 & 0 & 0 & 0 & P_2(t^{12}\sm^3) & P_1(t^{14}\sm^3)&P_0(t^{16}\sm^3)& t^{-6r} \sm^{-r}&0 \\
 0 & 0 & 0 & 0 & 0 & P_2(t^{14}\sm^3)&P_1(t^{16}\sm^3)&0  & 0  \\
 0 & 0 & 0 & 0 & 0 & 0&P_2(t^{16}\sm^3)& -t^{6r} \sm^{r}&0
\end{bmatrix} \begin{bmatrix} c_0\\c_1\\c_2\\c_3\\c_4\\c_5\\c_6\\Q_2\\Q_1\end{bmatrix} = \begin{bmatrix} -t^{-2r}\sm^{-r}\\0\\t^{2r}\sm^r\\0\\0\\0\\0\\0\\0\end{bmatrix}$}$$
Applying Cramer's rule gives us an annihilator of $T_n$:
$$Q(t,\sm,\sl) = 1+\frac{\det A_{9}}{\det A} \sl + \frac{\det A_{8}}{\det A} \sl^2,$$
where $A_i$ is the matrix $A$ with the $i$-th column replaced by the vector on the right side of the equality. This solution exists since $\det A \neq 0$. We can get an annihilator with coefficients in $\z[t^{\pm 1},\sm]$ by multiplying by a suitable element to clear the denominators. 
\end{exa}
It remains to check that $Q(t,\sm,\sl)$ is an inhomogeneous recurrence relation for $T_n$ rather than homogeneous. Suppose $Q(t,\sm,\sl)T_n = 0$. Then we have a homogeneous annihilator for $J_{\sc,n}$ since
$$Q(t,\sm,\sl)(t^{2rs}\sm^{rs} \sl^2-t^{-2rs}\sm^{-rs})J_{\sc,n} = 0.$$
It is proved in \cite{Le} that any recurrence polynomial of the colored Jones polynomial of a knot, when evaluated at $t = -1$, must contain the factor $(\sl-1)$. However, here we have
$$Q(-1,\sm,\sl)(\sm^{rs}\sl^2 - \sm^{-rs}) = Q(-1,\sm,\sl)\sm^{rs}(\sl-\sm^{-rs})(\sl+\sm^{rs}),$$
and $rs \neq 0$, so we must have $\sl-1$ dividing $Q(-1,\sm,\sl)$. We shall see shortly, however, that $Q(-1,\sm,\sl)$ is an irreducible polynomial of $\sl$-degree 2 over $\complex(\sm,\sl)$, so we cannot have $\sl-1$ dividing $Q(-1,\sm,\sl)$. We conclude that $Q(t,\sm,\sl) T_n = B(t,\sm)$ is an inhomogeneous recurrence relation.
\end{proof}

We would now like to check the $AJ$-conjecture. Recall that equation (\ref{A-poly of C}) gives
$$A_\sc(\sm,\sl)= (\sl-1)(\sm^{2rs}\sl^2-1) Red(Res_\fl(\frac{A_{\se}(\sm^{s},\fl)}{\fl-1},\fl^s-\sl))$$
up to a factor of a power of $\sm$. With $Q(t,\sm,\sl)$ given in Proposition \ref{prop: s>2 annihilator}, we have an annihilator
$$(\sl-1)B(t,\sm)^{-1}Q(t,\sm,\sl)(t^{2rs}\sm^{rs} \sl^2-t^{-2rs}\sm^{-rs})J_{\sc,n} = 0,$$
so to verify the $AJ$-conjecture, it is enough to show that $Q(-1,\sm,\sl)$ is equal to the remaining factor $Red(Res_\fl(\frac{A_\se(\sm^s,\fl)}{\fl-1},\fl^s-\sl))$ up to a factor of an element in $\complex(\sm)$. In fact, we shall see that $Res_\fl(\frac{A_\se(\sm^s,\fl)}{\fl-1},\fl^s-\sl)$ is irreducible for all positive values of $s$, so there are no repeated factors, so we can ignore the function $Red$. Connecting these two polynomials is the focus of the next section.

\subsection{The Resultant} \label{subsec: Resultant} The goal of this section is to prove the following proposition.
\begin{prop}\label{prop: Q=Res}
Let $r,s$ be relatively prime integers, $s > 1$, and let $\tilde{\alpha}_\se(t,\sm,\sl)$ be the inhomogeneous annihilator of $J_{\se,n}$ given in equation (\ref{eq: E Inhomogeneous}). Let $Q(t,\sm,\sl)$ be the polynomial given by Proposition \ref{prop: s>2 annihilator}. Then $$Q(-1,\sm,\sl) = C(\sm) Res_\fl(\tilde{\alpha}_\se(-1,\sm^s,\fl), \fl^s - \sl)$$ for some $C(\sm) \in \complex(\sm)$.
\end{prop}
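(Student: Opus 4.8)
The plan is to show that $Q(-1,\sm,\sl)$ and $R(\sm,\sl):=Res_\fl(\tilde{\alpha}_\se(-1,\sm^s,\fl),\fl^s-\sl)$ are both quadratic in $\sl$ over $\complex(\sm)$ with the same pair of roots, so that they agree up to a leading factor $C(\sm)\in\complex(\sm)$. Write $\bar\alpha(\fl):=\tilde{\alpha}_\se(-1,\sm^s,\fl)=P_2(-1,\sm^s)\fl^2+P_1(-1,\sm^s)\fl+P_0(-1,\sm^s)$ and let $\rho_1,\rho_2$ be its roots in $\fl$. Everything below is an identity of polynomials in $\sm$ and $\sl$, so it suffices to argue over the open locus where $\rho_1\neq\rho_2$, i.e.\ away from the discriminant of $\bar\alpha$. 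First I would record the shape of $R$: since $\fl^s-\sl$ is monic with roots the $s$-th roots of $\sl$, the product formula for the resultant gives
\[R(\sm,\sl)=P_2(-1,\sm^s)^{s}(\rho_1^{s}-\sl)(\rho_2^{s}-\sl)=P_2(-1,\sm^s)^{s}\big(\sl^2-(\rho_1^{s}+\rho_2^{s})\sl+(\rho_1\rho_2)^{s}\big),\]
so up to the factor $P_2(-1,\sm^s)^{s}$ its coefficient vector is $\big(1,\,-(\rho_1^{s}+\rho_2^{s}),\,(\rho_1\rho_2)^{s}\big)$, where $\rho_1\rho_2=P_0(-1,\sm^s)/P_2(-1,\sm^s)$.

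Next I would read off $Q(-1,\sm,\sl)$ from the linear system of Proposition~\ref{prop: s>2 annihilator}. At $t=-1$ every entry $P_i(t^{2(s-1+k)}\sm^s)$ collapses to $P_i(-1,\sm^s)$, so the coefficient matrix becomes $[\,M\mid v_2\mid v_1\mid v_0\,]$, where $M$ is the $(2s+3)\times(2s+1)$ banded Toeplitz block with constant band $(P_0(-1,\sm^s),P_1(-1,\sm^s),P_2(-1,\sm^s))$ and each $v_i$ is the sparse column carrying $\sm^{-r}$ and $-\sm^{r}$. As this matrix has a one-dimensional kernel, Cramer's rule expresses the coefficients of $Q(-1,\sm,\sl)=Q_2\sl^2+Q_1\sl+Q_0$ as the signed maximal minors: for a common scalar $\lambda(\sm)$,
\[Q_2(-1,\sm)=\lambda\det[\,M\mid v_1\mid v_0\,],\quad Q_1(-1,\sm)=-\lambda\det[\,M\mid v_2\mid v_0\,],\quad Q_0(-1,\sm)=\lambda\det[\,M\mid v_2\mid v_1\,].\]

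The crux — and the step I expect to be hardest — is evaluating these minors. Identifying $\complex^{2s+3}$ with $\complex[\fl]_{\le 2s+2}$ via $e_j\leftrightarrow\fl^{j-1}$, the columns of $M$ become $\bar\alpha,\bar\alpha\fl,\dots,\bar\alpha\fl^{2s}$ (i.e.\ multiplication by $\bar\alpha$), and one checks directly that $v_i\leftrightarrow\fl^{is}h(\fl)$ with $h(\fl)=\sm^{-r}-\sm^{r}\fl^2$. Since the columns of $M$ span $\bar\alpha\cdot\complex[\fl]_{\le 2s}$, whose quotient inside $\complex[\fl]_{\le 2s+2}$ is $\complex[\fl]/(\bar\alpha)$, passing to a basis adapted to this subspace shows that each minor equals a fixed factor, independent of the two appended columns, times the $2\times2$ determinant of their images in $\complex[\fl]/(\bar\alpha)$. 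Evaluating in the eigenbasis given by $\fl\mapsto\rho_1,\rho_2$ (valid since $\rho_1\neq\rho_2$), that $2\times2$ determinant for columns $\fl^{is}h,\fl^{js}h$ becomes $h(\rho_1)h(\rho_2)(\rho_1^{is}\rho_2^{js}-\rho_1^{js}\rho_2^{is})$ times a common Vandermonde factor. Hence all three minors share the factors $\lambda$, the adapted-basis determinant, the Vandermonde factor, $h(\rho_1)h(\rho_2)$, and $(\rho_1^{s}-\rho_2^{s})$; dividing them out leaves
\[\big(Q_2(-1,\sm):Q_1(-1,\sm):Q_0(-1,\sm)\big)=\big(1:-(\rho_1^{s}+\rho_2^{s}):(\rho_1\rho_2)^{s}\big),\]
exactly the coefficient vector of $R$ up to $P_2(-1,\sm^s)^{s}$, which yields $Q(-1,\sm,\sl)=C(\sm)R(\sm,\sl)$. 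The main obstacles are the bookkeeping that the non-resultant factors really are independent of which two columns $v_i$ are appended, so that they cancel in the ratio, and verifying that the shared factors $h(\rho_1)h(\rho_2)$ and $\rho_1^{s}-\rho_2^{s}$ do not vanish identically in $\sm$; the latter holds because $\rho_1/\rho_2$ is a non-constant algebraic function of $\sm$, so generically $\rho_1^{s}\neq\rho_2^{s}$ and $h(\rho_i)\neq0$.
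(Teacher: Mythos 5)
Your argument is correct in its essentials, but it takes a genuinely different route from the paper's. The paper never looks at the minors of the matrix: it builds an actual sequence $S_n$ over $\complex(\sm)$ satisfying $\tilde{\alpha}_\se(-1,\sm^s,\sl)\cdot S_n=b(-1,\sm^s)$ with $S_0=S_1=1$, forms $T_n=\sm^r S_{s(n+1)+1}-\sm^{-r}S_{s(n+1)-1}$, shows that $R(\sl)=Res_\fl(\tilde{\alpha}_\se(-1,\sm^s,\fl),\fl^s-\sl)$ annihilates $T_n$ (Lemma \ref{lemma: Res Annihilator}), shows $T_n$ is not constant, proves $R(\sl)$ is \emph{irreducible} over $\complex(\sm)$ (Lemma \ref{lemma: Res Irreducible} plus an induction on the $\sm$-degrees $\mu(a_k)$ of the powers of a root), and then uses that the inhomogeneous annihilator ideal $\widehat{\mathcal{A}}_T$ is principal: $R$ generates it, $Q(-1,\sm,\sl)$ lies in it because it is a kernel vector of the same matrix $A$ specialized at $t=-1$, and equality of degrees forces $Q(-1,\sm,\sl)=C(\sm)R(\sl)$. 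You instead stay entirely inside linear algebra: you recognize $[\,M\mid v_2\mid v_1\mid v_0\,]$ at $t=-1$ as an augmented Sylvester-type matrix, write the $Q_i$ as signed maximal minors, and factor each minor through the alternating bilinear form induced on the two-dimensional quotient $\complex[\fl]/(\bar\alpha)$, evaluated in the eigenbasis $\fl\mapsto\rho_1,\rho_2$. I checked this computation: the columns of $M$ are indeed $\fl^k\bar\alpha$, the columns $v_i$ are indeed $\fl^{is}h(\fl)$ with $h(\fl)=\sm^{-r}-\sm^{r}\fl^2$, and the three minors come out proportional to $(\rho_1^s-\rho_2^s)$, $(\rho_1^{2s}-\rho_2^{2s})$, $(\rho_1\rho_2)^s(\rho_1^s-\rho_2^s)$, giving exactly the coefficient vector $\bigl(1:-(\rho_1^s+\rho_2^s):(\rho_1\rho_2)^s\bigr)$ of the resultant via the product formula. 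Your approach buys a purely formal proof that needs no auxiliary sequences and no principal-ideal argument, and it makes transparent where the resultant's coefficients come from; the paper's approach buys the irreducibility of $R(\sl)$ as a by-product, which the paper needs anyway (to rule out $(\sl-1)\mid Q(-1,\sm,\sl)$ at the end of Proposition \ref{prop: s>2 annihilator}, and to discard the $Red$ operation), so with your route that irreducibility would still have to be established separately.

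Two spots need tightening, both easily patched. First, your closing non-vanishing claims are asserted rather than proved: that $\rho_1/\rho_2$ is non-constant is precisely what needs an argument. The cleanest fix is that no argument is needed at all: the factorization shows the three $Q$-minors share the factor $D_0'\,h(\rho_1)h(\rho_2)(\rho_1^s-\rho_2^s)$; if this factor vanished, all three minors would vanish, so the (nonzero) kernel vector of $A|_{t=-1}$ would be supported on the $c_k$-coordinates, giving a linear dependence $\sum_k c_k\fl^k\bar\alpha=0$ among the columns of $M$ alone, which is impossible. Alternatively, argue directly: $h(\rho_i)=0$ would force $\rho_i=\pm\sm^{-r}\in\complex(\sm)$, contradicting irreducibility of $\bar\alpha$ over $\complex(\sm)$; and $\rho_1^s=\rho_2^s$ would give $\rho_1=\zeta\rho_2$ with $\zeta\in\complex$ a root of unity, whence applying the Galois involution swapping $\rho_1,\rho_2$ (which fixes $\complex$) yields $\zeta^2=1$, so $\rho_1=-\rho_2$ and $P_1(-1,\sm^s)=0$, which is false. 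Second, you should justify that $Q(-1,\sm,\sl)$ — defined by solving the system at generic $t$ and then specializing — actually spans the kernel of $A|_{t=-1}$: this needs that the corank of $A|_{t=-1}$ is still $1$ (which holds; indeed the paper proves the rank statement of Proposition \ref{prop: s>2 annihilator} by setting $t=-1$) and that $Q(-1,\sm,\sl)\neq0$, which holds because the coefficients of $Q$ are chosen relatively prime in $\z[t^{\pm1},\sm]$, so they cannot all be divisible by $1+t$. Neither gap is structural.
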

Our method of attack is to show that in a commutative setting, the analogous problem to what we solved in the previous section has a straightforward solution via the resultant. Our brute force approach then reduces to more or less computing the resultant when we evaluate at $t = -1$.

Let us recall the definition of the resultant.
\begin{defin}
Let $\mathbb{K}$ be a field and let $f(x) = f_n x^n + \cdots + f_0$ and $g(x) = g_m x^m + \cdots + g_0$ be polynomials in $\mathbb{K}[x]$ of degree $n$ and $m$ respectively. Then the \textbf{resultant of $f$ and $g$} is the determinant of the $(m+n) \times (m+n)$ \textbf{Sylvester matrix of $f$ and $g$},
$$ Res(f,g) = \begin{array}{|cccccccc|}
f_0 & & & & g_0 & & &\\
f_1 & f_0& & & g_1 & g_0 & &\\
f_2 & f_1& \ddots & & g_2 & g_1& \ddots &\\
\vdots & f_2 & \ddots & f_0& \vdots & g_2 & \ddots &g_0\\
f_n & \vdots & \ddots& f_1 & g_m &\vdots &\ddots &g_1\\
 & f_n & \ddots & f_2 &  & g_m & \ddots & g_2\\
  & & \ddots & \vdots & & &\ddots & \vdots \\ 
  & & & f_n & & & & g_m\\[-4pt]
 \multicolumn{4}{c}{$\upbracefill$} & \multicolumn{4}{c}{$\upbracefill$}\\[-4pt]
 \multicolumn{4}{c}{\text{\small $m$ columns}} & \multicolumn{4}{c}{\text{ \small $n$ columns}}\\
\end{array}.$$
Moreover, given two polynomials over two variables, $f(x,y) = f_n(x)y^n + \ldots + f_0(x)$ and $g(x,y) = g_m y^m + \ldots + g_0(x)$, we define the \textbf{resultant of $f$ and $g$ with respect to $y$} or \textbf{eliminating $y$}, denoted $Res_y(f,g)$, to be $Res(f,g)$ over the field $\mathbb{K}(x)$.
\end{defin}
A key property of the resultant $Res_y(f,g)(x)$, as stated  in \cite[Ch.\ 12, p.\ 398]{GKZ}, is that $\alpha$ is a root of $Res_y(f,g)$ if and only if either $f(\alpha,y)$ and $g(\alpha,y)$ have a common root or $f_n(\alpha) = g_m(\alpha) = 0$. Thus, given a system of two polynomial equations in two variables
$$ \begin{cases} f(x,y) &= 0\\g(x,y) &= 0, \end{cases}$$
the resultant can be used to eliminate one of the variables from the system.

We now consider the commutative analog to the problem we solved in Section \ref{subsec: s>2 annihilator}. That is, given a recurrence relation of a sequence, how can we find a recurrence relation of a certain related sequence?

Fix a field $F := \complex(\sm)$ and consider $\mathcal{S} := \{S: \naturals \to F\}$, the set of $F$-valued sequences. Then $\mathcal{S}$ is an $F[\sl]$-module, where $\sl \cdot S_n := S_{n+1}$ for all $S \in \mathcal{S}$ and an element $c \in F$ acts on a sequence by multiplication. Given a sequence $S \in \mathcal{S}$, we can define the set $\mathcal{A}_{S} := \{ P(\sl) \in F[\sl] \mid P(\sl) \cdot S_n = 0 \text{ for all $n$}\}$, called the \textit{annihilator ideal} of $S$. Moreover, $F[\sl]$ is a principal ideal domain. A generator of the principal ideal $\mathcal{A}_{S}$ is thus an element of $\mathcal{A}_{S}$ of minimal degree and corresponds to a minimal order homogeneous recurrence relation of $S$.

Similarly, the set $\widehat{\mathcal{A}}_{S} := \{P(\sl) \in F[\sl] \mid \text{there exists $b \in F$ such that for every $n \in \naturals$, } P(\sl) \cdot S_n = b\}$ is a principal ideal, which consists of polynomials giving rise to inhomogeneous recurrence relations.

Suppose $S \in \mathcal{S}$ satisfies a minimal recurrence relation $\sum_{i=0}^d P_i S_{n+i} = b$. Then $S$ has an (inhomogeneous) annihilator $P(\sl) = \sum_{i=0}^d P_i \sl^i$. Consider the new sequence $T_n := S_{kn}$ for some $k > 1$. How do we find a recurrence relation for $T_n$? 

Let $\mathcal{S}_k := \{T \in \mathcal{S} \mid  T_n = S_{kn+i} \text{ for some $S_n  \in \mathcal{S}$, $i \in \zed$ }\}$. Now $\mathcal{S}_k$ is a $F[\sl,\fl]$-module, where $\sl \cdot T_{n} = S_{k(n+1)+i} = S_{kn+k+i} = T_{n+1}$ and $\fl \cdot T_{n} = S_{k(n+1/k)+i} = S_{kn+i+1} \in \mathcal{S}_k$. Then if $T_n = S_{kn}$ and $S_n$ is annihilated by $P(\sl)$, then $\sum_{i=0}^d P_i S_{kn+i}=b$ as well, so we have $T_n$ is annihilated by the polynomial $P(\fl)$. We would like to obtain an annihilator in the variable $\sl$ only, so we want to eliminate $\fl$. We can do this using the fact that $\fl^k$ acts as $\sl$. Then we effectively want to solve the system of polynomial equations
$$\begin{cases} P(\fl) &= 0\\ \fl^k-\sl &= 0. \end{cases}$$
Consider the polynomial $R(\sl) = Res_\fl(P(\fl),\fl^k-\sl)$, the resultant of the two polynomials with respect to the variable $\fl$. We check that $R(\sl)$ is actually what we want, an annihilator of $S_{kn}$.
\begin{lemma}\label{lemma: Res Annihilator}
Let $P(\sl)$ be a (possibly inhomogeneous) annihilator of the sequence $S_n$ and suppose $P(\sl)$ has no repeated roots in the algebraic closure $\overline{F}$. Then $R(\sl) = Res_\fl(P(\fl),\fl^k-\sl)$ is an annihilator of $S_{kn}$.
\end{lemma}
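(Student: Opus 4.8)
The plan is to bypass any explicit computation of roots and instead exploit the Bézout-type identity satisfied by the resultant, which handles the homogeneous and inhomogeneous cases uniformly. First I would record the two operator identities governing $T_n := S_{kn}$. Writing $\sl$ and $\fl$ for the shift-by-$k$ and shift-by-$1$ operators on $\mathcal{S}_k$ (so that $\fl\, T_n = S_{kn+1}$ and $\sl\, T_n = T_{n+1} = S_{kn+k}$), one checks directly that these two operators commute and that $\sl = \fl^k$ on this module, whence $(\fl^k - \sl)T = 0$. Moreover, since $P(\sl)S_n = b$ for the constant $b \in F$ coming from the (possibly inhomogeneous) relation, applying $P$ in the variable $\fl$ gives $P(\fl)T_n = \sum_i P_i S_{kn+i} = (P(\sl)S)_{kn} = b$ for every $n$; that is, $P(\fl)T$ is the constant sequence $b\,\mathbf{1}$, with the \emph{same} constant $b$.

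Next I would invoke the standard fact that the resultant lies in the ideal from which it is formed: viewing $P(\fl)$ and $\fl^k - \sl$ as elements of $F[\sl][\fl]$, the Sylvester-matrix adjugate identity yields $A(\sl,\fl),\,B(\sl,\fl) \in F[\sl,\fl]$ with
\[ R(\sl) = Res_\fl(P(\fl),\fl^k-\sl) = A(\sl,\fl)\,P(\fl) + B(\sl,\fl)\,(\fl^k - \sl). \]
Since $\sl$ and $\fl$ act as commuting operators on $T$, I would substitute them into this identity. Using the two evaluations above,
\[ R(\sl)T = A(\sl,\fl)\bigl(P(\fl)T\bigr) + B(\sl,\fl)\bigl((\fl^k-\sl)T\bigr) = A(\sl,\fl)(b\,\mathbf{1}) + 0. \]
Because every shift fixes the constant sequence $\mathbf{1}$, the right-hand side equals $b\,A(1,1)\,\mathbf{1}$, again a constant sequence. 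Hence $R(\sl)T_n$ is independent of $n$, so $R(\sl)$ is an (inhomogeneous) annihilator of $S_{kn}$, as claimed.

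The no-repeated-roots hypothesis is not strictly needed for the ideal argument above, but it is exactly what pins down the shape of $R$ for later use: over $\overline{F}$ one has the product formula $Res_\fl(P(\fl),\fl^k-\sl) = P_d^{\,k}\prod_{i}(\rho_i^k - \sl) \doteq \prod_i(\sl - \rho_i^k)$, and distinctness of the roots $\rho_i$ guarantees $R$ is nonzero of degree exactly $\deg P$ with these simple factors, which is the form I expect to match against $Q(-1,\sm,\sl)$ in Proposition \ref{prop: Q=Res}. The main obstacle I anticipate is purely bookkeeping rather than conceptual: one must set up $\mathcal{S}_k$ carefully so that $\sl$ and $\fl$ are genuinely commuting well-defined operators, verify that substituting them into the Bézout identity (whose coefficients $A,B$ live in $F[\sl,\fl]$) is legitimate, and confirm that applying $A(\sl,\fl)$ to a constant sequence returns the constant $b\,A(1,1)$ rather than something $n$-dependent. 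Once that is in place, the conclusion is immediate.
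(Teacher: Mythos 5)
Your proof is correct, and it takes a genuinely different route from the paper's. The paper works with roots: for each root $\beta\in\overline{F}$ of $P(\fl)$, the polynomials $P(\fl)$ and $\fl^k-\beta^k$ share the root $\beta$, so by the common-root property of resultants $\beta^k$ is a root of $R(\sl)$, i.e.\ $\beta$ is a root of $R(\fl^k)$; since $P$ has no repeated roots this yields the divisibility $P(\fl)\mid R(\fl^k)$, and writing $R(\fl^k)=G(\fl)P(\fl)$ and applying it to $S_{kn}$ (shifts fix constant sequences, so $G(\fl)$ turns the constant $b$ into the constant $G(1)b$) finishes the proof, because $\fl^k$ acts as $\sl$. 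You instead invoke ideal membership: $R(\sl)=A(\sl,\fl)P(\fl)+B(\sl,\fl)(\fl^k-\sl)$ in $F[\sl,\fl]$, evaluated on $T$ in the commuting-shift module, where $(\fl^k-\sl)T=0$ and $P(\fl)T=b\mathbf{1}$, giving $R(\sl)T=bA(1,1)\mathbf{1}$ at once. Your version is slightly stronger: the no-repeated-roots hypothesis is never used (in the paper it is genuinely needed, at the divisibility step), and, as you note, it only matters later, when one wants $R$ to be nonzero of degree $\deg P$ with simple factors so it can be matched against $Q(-1,\sm,\sl)$. What the paper's route buys is economy of prerequisites: it relies only on the common-root characterization of the resultant already quoted from \cite{GKZ}, whereas yours needs the Sylvester-adjugate (B\'ezout) identity, and it sidesteps the bookkeeping you flag about making $\sl$ and $\fl$ genuinely well-defined commuting operators. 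That point is worth tightening in your write-up, since the action of $\fl$ on $\mathcal{S}_k$ as defined depends on the representation $T_n=S_{kn+i}$ rather than on $T$ alone; the clean fix is to let $\fl$ be the shift on the original sequence $S$, set $\sl:=\fl^k$, run your identity on $S$ itself, and restrict to the indices $kn$ only at the end.
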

\begin{proof}
Let $\beta \in \overline{F}$ be a root of $P(\fl)$ and let $\alpha := \beta^k$. Then $\beta$ is a common root of $P(\fl)$ and $\fl^k-\alpha$. Then by the previously mentioned key property of resultants, $\alpha$ is a root of $R(\sl)$, and thus $\beta$ is a root of $R(\fl^k)$. Therefore every root of $P(\fl)$ is also a root of $R(\fl^k)$, so $P(\fl)$ divides $R(\fl^k)$ since $P(\fl)$ has no repeated roots. Since $P(\fl)$ annihilates $S_{kn}$, it follows that $R(\fl^k)$ is also an annihilator of $S_{kn}$. Since $\fl^k$ has the same action as $\sl$, we have $P(\sl)$ is an annihilator of $S_{kn}$, as needed.
\end{proof}
We will also require the following lemma.
\begin{lemma} \label{lemma: Res Irreducible}
Let $k > 1$ and let $P(\fl) \in F[\sl,\fl]$ be an irreducible polynomial of $\fl$-degree $d \leq 3$. If every root $\beta \in \overline{F}$ of $P$ satisfies $\beta^{k} \notin F$, then $R(\sl) = Res_\fl(P(\fl),\fl^k-\sl)$ is an irreducible polynomial of $\sl$-degree $d$.
\end{lemma}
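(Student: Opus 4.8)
The plan is to evaluate the resultant through its product formula over the roots of $P$, and then reduce the irreducibility of $R(\sl)$ to a statement about how the Galois group permutes the $k$-th powers of those roots. (Here $P$ is read as a polynomial in $\fl$ over $F$, its roots lying in $\overline F$, which is exactly the situation in our application, where $P(\fl)=\tilde{\alpha}_\se(-1,\sm^s,\fl)$ carries no $\sl$-dependence.) First I would write $P(\fl)=a\prod_{i=1}^d(\fl-\beta_i)$ over $\overline F$, where $a\in F^{\times}$ is the leading coefficient and $\beta_1,\dots,\beta_d$ are the roots, which are distinct since $P$ is irreducible in characteristic zero and hence separable. The standard product formula for the resultant gives
\[
R(\sl)=Res_\fl(P(\fl),\fl^{k}-\sl)=a^{k}\prod_{i=1}^d(\beta_i^{k}-\sl),
\]
so $R$ has $\sl$-degree exactly $d$ (its leading coefficient is $(-1)^d a^{k}\neq0$) and its roots are precisely $\beta_1^{k},\dots,\beta_d^{k}$. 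Since $a^{k}\in F^{\times}$, the polynomial $R$ is irreducible over $F$ as soon as the monic polynomial $\prod_i(\sl-\beta_i^{k})$ is the minimal polynomial of $\beta_1^{k}$ over $F$; this holds provided the $\beta_i^{k}$ are \emph{distinct} and form a single orbit under $G:=\operatorname{Gal}(L/F)$, where $L$ is the splitting field of $P$. The single-orbit property is automatic: irreducibility of $P$ means $G$ acts transitively on $\{\beta_1,\dots,\beta_d\}$, and $\sigma(\beta_i^{k})=\sigma(\beta_i)^{k}$, so $G$ acts transitively on $\{\beta_1^{k},\dots,\beta_d^{k}\}$ as well.

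The crux, and the step I expect to be the main obstacle, is showing that the $\beta_i^{k}$ are distinct; this is exactly where the hypotheses $d\le 3$ and $\beta^{k}\notin F$ enter. Since a degree-one $P$ would have its root in $F$ (whence $\beta^{k}\in F$), the hypothesis forces $d\ge 2$, and in particular every $\beta_i\neq0$ (else $\fl\mid P$). Thus $\beta_i^{k}=\beta_j^{k}$ is equivalent to $\beta_i/\beta_j$ being a $k$-th root of unity. Because $F=\complex(\sm)$ contains $\complex$, every root of unity lies in $F$ and is fixed by $G$; hence the relation $\beta_i\sim\beta_j\iff\beta_i/\beta_j\in\mu_k$ is a $G$-invariant equivalence relation on the set of roots. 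A $G$-invariant partition of a transitive $G$-set is a block system, so all its classes share a common size $e$ dividing $d$. As $d\in\{2,3\}$ is prime, $e\in\{1,d\}$. If $e=d$, then all $\beta_i^{k}$ coincide in one value $\gamma=\beta_1^{k}$, which is then fixed by $G$ and so lies in $L^{G}=F$, contradicting $\beta^{k}\notin F$. Therefore $e=1$, i.e.\ the $\beta_i^{k}$ are distinct.

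Combining the two facts—distinct roots lying in a single Galois orbit—the monic polynomial $\prod_i(\sl-\beta_i^{k})$ is $G$-invariant (its coefficients are symmetric in the $\beta_i^{k}$) and hence lies in $F[\sl]$, and $\beta_1^{k}$ has exactly $d$ conjugates, so this polynomial is the minimal polynomial of $\beta_1^{k}$ and is irreducible of degree $d$ over $F$; multiplying by $(-1)^d a^{k}\in F^{\times}$ shows $R(\sl)$ is irreducible of $\sl$-degree $d$, as claimed. The only genuinely delicate point is the distinctness argument, and the restriction $d\le 3$ is essential precisely there: for composite $d$ (say $d=4$) a block system with an intermediate block size $1<e<d$ could arise, allowing the $\beta_i^{k}$ to coincide in groups even though no single $\beta^{k}$ lies in $F$, which would make $R$ reducible.
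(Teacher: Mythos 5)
Your proof is correct, but it follows a genuinely different route from the paper's, which is far shorter. The paper uses only the root characterization of the resultant already quoted from \cite{GKZ}: every root of $R(\sl)$ in $\overline{F}$ is of the form $\beta^k$ with $P(\beta)=0$, hence lies outside $F$ by hypothesis; so $R$, which has $\sl$-degree $d\le 3$, has no linear factor over $F$ and is therefore irreducible. No product formula, no Galois theory, and no distinctness of the $\beta_i^k$ is needed there: in degree $2$ or $3$, reducibility is equivalent to having a root in $F$. Your argument instead identifies $R$, up to the unit $(-1)^d a^k$, with the minimal polynomial of $\beta_1^k$ over $F$, getting the single-orbit property from transitivity and distinctness from a block-system argument that uses primality of $d\in\{2,3\}$. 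This costs more machinery but buys two things: your proof works verbatim for every prime $d$ (the paper's criterion stops at $d\le 3$), and it makes the root structure of $R$ explicit. Both arguments genuinely need their respective hypotheses: for $d=4$, $k=2$, the polynomial $P(\fl)=\fl^4-\sm$ over $F=\complex(\sm)$ satisfies the $\beta^k\notin F$ condition, yet $R(\sl)=(\sl^2-\sm)^2$ is reducible --- exactly the phenomenon your closing remark anticipates. One simplification worth noting: your detour through roots of unity (and hence the use of $\complex\subseteq F$) is unnecessary, since the equivalence relation $\beta_i\sim\beta_j\iff\beta_i^k=\beta_j^k$ is preserved by any field automorphism; with that change your argument, like the paper's, uses no special property of $F$ beyond characteristic zero.
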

\begin{proof}
Suppose $\alpha \in \overline{F}$ is a root of $R(\sl)$. Then there is $\beta \in \overline{F}$ such that $\beta^k = \alpha$ and $P(\beta) = 0$, and so $\alpha \notin F$ by assumption. Therefore, $R(\sl)$ has no roots in $F$, so it has no linear factors, and since $R(\sl)$ is also of degree $d \leq 3$ by the definition of the resultant, we conclude that $R(\sl)$ is irreducible.
\end{proof}
We are now ready to connect the polynomial $Q(t,\sm,\sl)$ of Proposition \ref{prop: s>2 annihilator} to the $A$-polynomial of $E$.
\begin{proof}[Proof of Proposition \ref{prop: Q=Res}.]
We know that the polynomial $R(\sl) := Res_\fl(\tilde{\alpha}_\se(-1,\sm^s,\fl),\fl^s-\sl)$ solves the problem of finding an annihilator for the sequence $S_{sn}$ if $S_n$ has the annihilator $\tilde{\alpha}_\se(-1,\sm^s,\sl)$. We will construct a sequence $S_n$ for which both $Q(-1,\sm,\sl)$ and this resultant are annihilators, and since $\widehat{\mathcal{A}}_{S}$ is a principal ideal, the resultant is irreducible, and the polynomials have the same degree, they will be the same up to a unit.

Fix the field $\complex(\sm)$. Since $\tilde{\alpha}_\se(-1,\sm^s,\sl)$ has $\sl$-degree 2, it gives rise to an inhomogeneous recurrence relation
$$P_2(-1,\sm^s) S_{n+2} + P_1(-1,\sm^s) S_{n+1} + P_0(-1,\sm^s) S_n = b(-1,\sm^s)$$
for some sequence $S_n$, and fixing two initial conditions defines $S_n$, so let $S_0 = 1$ and $S_1 = 1$.

Let $T_n = \sm^r S_{s(n+1)+1} - \sm^{-r} S_{s(n+1)-1}$ for some $r \in \zed$. By Lemma \ref{lemma: Res Annihilator}, we know that $R(\sl) S_{sn} = B(\sm)$ for some $B(\sm) \in \complex(\sm)$. Moreover, we have
\begin{align*}
R(\sl) \cdot T_n &= \sm^r R(\sl) \cdot S_{s(n+1)+1} - \sm^{-r} R(\sl) \cdot S_{s(n+1)-1} \\
&= \sm^r R(\sl) \fl^{s+1} \cdot S_{sn} - \sm^{-r} R(\sl) \fl^{s-1} \cdot S_{sn}\\ 
&= (\sm^r - \sm^{-r})B(\sm),
\end{align*}
so $R(\sl)$ is an annihilator of $T_n$.

We claim $T_n$ is not a constant sequence. We show this by computing $\mu(T_n)$, the $\sm$-degree of $T_n$, and showing it is not zero. First, we have
$$ S_{n+2} = - \frac{P_1(-1,\sm^s)}{P_2(-1,\sm^s)} S_{n+1} - \frac{P_0(-1,\sm^s)}{P_2(-1,\sm^s)} S_n + \frac{b(-1,\sm^s)}{P_2(-1,\sm^s)},$$
and $\mu(P_0(-1,\sm^s)) = \mu(P_2(-1,\sm^s)) = 8s$, $\mu(P_1(-1,\sm^s)) = 12s$, and $\mu(b(-1,\sm^s)) = 11s$, so
$$\mu(S_{n+2}) = \max(4s + \mu(S_{n+1}), \mu(S_n), 11s), \quad n \geq 0$$
while $\mu(S_0) = \mu(S_1) = 0$, and so $\mu(S_2) = 11s$ and $\mu(S_n) = 11s+(n-2)4s = 4sn +3s$ for $n \geq 3$. Thus
\begin{align*}
\mu(T_n) & = \max(r + 4s(s(n+1)+1)+3s, -r + 4s(s(n+1)-1)+3s) \\
&= \max(r+4s^2n+4s^2+7s, -r + 4s^2n+4s^2-s)
\end{align*}
since $r+4s^2n+4s^2+7s = -r + 4s^2n+4s^2-s$ if and only if $r = -4s$, but $r$ and $s$ are relatively prime. Since $\mu(T_n) \neq 0$ and is finite, $T_n$ is not a constant sequence. In particular, $\widehat{\mathcal{A}}_T \neq \complex(\sm)$.

Next, we claim that $R(\sl)$ is irreducible over $\complex(\sm)$ and consequently a generator of $\widehat{\mathcal{A}}_T$. By Lemma \ref{lemma: Res Irreducible}, it is enough to show that for every root $\beta \in \overline{\complex(\sm)}$ of $\tilde{\alpha}_\se(-1,\sm^s,\fl)$, $\beta^k \notin \complex(\sm)$.

Suppose $P_2(-1,\sm^s) \beta^2 +P_1(-1,\sm^s) \beta+P_0(-1,\sm^s) = 0$. Since $\tilde{\alpha}_\se(-1,\sm^s,\fl)$ is irreducible over $\complex(\sm)$, we know $\beta \notin \complex(\sm)$. Then 
$$\beta^2 = -\frac{P_1(-1,\sm^s)}{P_2(-1,\sm^s)} \beta - \frac{P_0(-1,\sm^s)}{P_2(-1,\sm^s)},$$
which is not in $\complex(\sm)$ since $P_1(-1,\sm^s) \neq 0$. We claim that for all $k \in \naturals$, $\beta^k = a_k \beta + b_k$ for some $a_k, b_k \in \complex(\sm)$, $a_k \neq 0$. We show this by computing $\mu(a_k)$. We have $\mu(a_1) = \mu(1) = 0$, $\mu(a_2) = \mu(P_1(-1,\sm^s))-\mu(P_2(-1,\sm^s)) = 4s$, $\mu(b_1) = \mu(0) = -\infty$, $\mu(b_2) = 0$. Let $k > 2$, and assume that for all $i < k$, $\mu(a_i) = 4s(i-1)$. Then we have
\begin{align*}
\beta^k &= (a_{k-1} \beta + b_{k-1})\beta\\
&= a_{k-1} \beta^2 + b_{k-1} \beta\\
&= a_{k-1} \left(-\frac{P_1(-1,\sm^s)}{P_2(-1,\sm^s)} \beta - \frac{P_0(-1,\sm^s)}{P_2(-1,\sm^s)}\right) + b_{k-1} \beta\\
&= \left( -\frac{P_1(-1,\sm^s)}{P_2(-1,\sm^s)}a_{k-1} + b_{k-1}\right)\beta -\frac{ P_0(-1,\sm^s)}{P_2(-1,\sm^s)} a_{k-1},
\end{align*}
which means $b_k = - \frac{P_0(-1,\sm^s)}{P_2(-1,\sm^s)} a_{k-1}$, so $\mu(b_k) = \mu(a_{k-1})$, hence $\mu(b_{k-1}) = \mu(a_{k-2})$. Thus
\begin{align*}
\mu(a_k) &= \mu\left(-\frac{P_1(-1,\sm^s)}{P_2(-1,\sm^s)}a_{k-1} + b_{k-1}\right) \\
&= \max(4s + \mu(a_{k-1}), a_{k-2})\\
&= \max(4s+4s(k-2),4s(k-3))\\
&= 4sk-4s,
\end{align*}
so by induction, $\mu(a_k) > 0$ for all $k$ and hence $\beta^k$ is never in $\complex(\sm)$. We conclude that $R(\sl)$ is irreducible.

Now, consider this alternate solution to the same problem, in the manner of the proof of Proposition \ref{prop: s>2 annihilator}. We know $\tilde{\alpha}_\se(-1,\sm^s,\fl) \cdot S_{sn}=b(-1,\sm^s)$, so we have the equations
\begin{align*}
\tilde{\alpha}_\se(-1,\sm^s,\fl) \cdot S_{s(n+1)-1+j} &= \sum_{i=0}^2 P_i(-1,\sm^s) S_{s(n+1)-1+i+j}\\
 &= b(-1,\sm^s)
\end{align*}
for any $j$. We suspect we can find an inhomogeneous annihilator $\hat{Q}_2 \sl^2 + \hat{Q}_1 \sl + \hat{Q}_0$ of $T_n$ by finding a suitable linear combination of these equations:
$$ \sum_{j = 0}^{2s} c_j \sum_{i=0}^2 P_i(-1,\sm^s) S_{s(n+1)-1+i+j} = \sum_{i=0}^2 \hat{Q}_i T_{n+i}.$$
Setting the coefficients of each $S_{sn+k}$ equal to zero, we get a linear system in $2s+3$ equations and $2s+4$ unknowns. The resulting matrix of coefficients is exactly the matrix $A$ in equation \ref{eq:The Matrix} after setting $t = -1$. By the same argument as before, we see that we can find nonzero $\hat{Q}_0$, $\hat{Q}_1$, and $\hat{Q}_2$, so $\hat{Q}(\sl) = \hat{Q}_2 \sl^2 + \hat{Q}_1 \sl + \hat{Q}_0$ is an inhomogeneous annihilator of $T_n$ of degree 2, and this $\hat{Q}(\sl)$ is exactly $Q(-1,\sm,\sl)$ up to a factor of a rational function in $\complex(\sm)$.

Then since $R(\sl)$ is a generator of $\widehat{\mathcal{A}}_T$, $R(\sl)$ divides $Q(-1,\sm,\sl)$, and since they are both of degree 2, we have $C(\sm) R(\sl) = Q(-1,\sm,\sl)$ for some $C(\sm) \in \complex(\sm)$, which completes the proof.
\end{proof}

Finally, since $E$ satisfies the $AJ$-conjecture, we know $\tilde{\alpha}_\se(-1,\sm,\sl) = f(\sm) \frac{A_\se(\sm,\sl)}{\sl-1}$ for some polynomial $f(\sm)$. The resultant being the determinant of the Sylvester matrix gives us 
$$Res_\fl(\tilde{\alpha}_\se(-1,\sm^s,\fl),\fl^s-\sl) = Res_\fl(f(\sm^s) \frac{A_\se(\sm^s,\fl)}{\fl-1},\fl^s-\sl) = f(\sm^s)^s Res_\fl( \frac{A_\se(\sm^s,\fl)}{\fl-1},\fl^s-\sl).$$ 
It follows that $Q(-1,\sm,\sl)$ is equal to $Res_\fl( \frac{A_\se(\sm^s,\fl)}{\fl-1},\fl^s-\sl)$ up to a factor of an element in $\complex(\sm)$, and consequently the $AJ$-conjecture is satisfied for the knot $C$.

\subsection{Minimal degree of the recurrence relation}

Suppose there is a recurrence relation of $\sl$-degree at most 4 of $J_{\sc,n}$
$$D_4 J_{\sc,n+4} + D_3 J_{\sc,n+3} + D_2 J_{\sc,n+2} + D_1 J_{\sc,n+1} + D_0 J_{\sc,n} = 0.$$
Combining equation (\ref{eq: s=2 J_C(n+1)}) with the definition of $T_n$ in equation (\ref{eq: T_n definition}), we have
$$J_{\sc,n+2} = t^{-4rs} \sm^{-2rs} J_{\sc,n} + t^{-2rs} \sm^{-rs} T_n,$$
so we can use this to simplify our recurrence:
\begin{align*}
0 &= D_4 J_{\sc,n+4} + D_3 J_{\sc,n+3} + D_2 J_{\sc,n+2} + D_1 J_{\sc,n+1} + D_0 J_{\sc,n}\\
&= D_4(t^{-12rs} \sm^{-2rs} J_{\sc,n+2} + t^{-6rs} \sm^{-rs} T_{n+2}) + D_3(t^{-8rs} \sm^{-2rs} J_{\sc,n+1} + t^{-4rs} \sm^{-rs} T_{n+1})\\
&\quad + D_2(t^{-4rs} \sm^{-2rs} J_{\sc,n} + t^{-2rs} \sm^{-rs} T_n) + D_1J_{\sc,n+1} + D_0 J_{\sc,n}\\
&= (D_4 t^{-16rs}\sm^{-4rs} + D_2 t^{-4rs}\sm^{-2rs} + D_0) J_{\sc,n} + (D_3 t^{-8rs}\sm^{-2rs} + D_1) J_{\sc,n+1}\\
&\quad +(D_4 t^{-6rs}\sm^{-rs})T_{n+2} + (D_3 t^{-4rs} \sm^{-rs})T_{n+1} + (D_4t^{-14rs}\sm^{-3rs}+D_2t^{-2rs}\sm^{-rs})T_n,
\end{align*}
and by Proposition \ref{prop: s>2 annihilator}, we have
$$T_{n+2} = \frac{B}{Q_2} - \frac{Q_1}{Q_2} T_{n+1} - \frac{Q_0}{Q_2} T_n,$$
so making this substitution yields
\begin{align*}
0 &= (D_4 t^{-16rs}\sm^{-4rs}+D_2 t^{-4rs}\sm^{-2rs} + D_0) J_{\sc,n}+ (D_3 t^{-8rs}\sm^{-2rs} + D_1) J_{\sc,n+1} \\
&\quad + (D_4(t^{-14rs}\sm^{-3rs} - \frac{Q_0}{Q_2}t^{-6rs}\sm^{-rs}) + D_2 t^{-2rs}\sm^{-rs})T_n \\
&\quad  + (D_3 t^{-4rs} \sm^{-rs} - D_4 \frac{Q_1}{Q_2} t^{-6rs}\sm^{-rs})T_{n+1} + D_4 \frac{B}{Q_2} t^{-6rs}\sm^{rs},
\end{align*}
and multiplying both sides of the equation by $Q_2$ gives us something of the form
$$0 =  D_4' J_{\sc,n} + D_3' J_{\sc,n+1} + D_2' T_n + D_1' T_{n+1} + D_0'$$
where the $D_i'$ are Laurent polynomials in $t$ and $\sm$. It is easy to see that $D_4' = \ldots = D_0' = 0$ implies $D_4 = \ldots = D_0 = 0$. Notice that if the degree of the recurrence polynomial is less than 4, then $D_4 = 0$ and so $D_0' = 0$.
\begin{lemma}
When $r > 4s$ or $r < -4s$, if $D_0' = 0$, then $D_i' = 0$ for $i = 1,\ldots, 4$ as well.
\end{lemma}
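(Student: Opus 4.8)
The plan is to follow the template of Lemma~\ref{lemma: s=2 not degree 2} from the $s=2$ case, now applied to the four‑term relation $0 = D_4' J_{\sc,n} + D_3' J_{\sc,n+1} + D_2' T_n + D_1' T_{n+1} + D_0'$. First I would record that $D_0' = D_4\,B\,t^{-6rs}\sm^{rs}$, and since $B$ is nonzero by Proposition~\ref{prop: s>2 annihilator}, the hypothesis $D_0'=0$ forces $D_4=0$. This collapses the remaining coefficients to $D_1' = Q_2 D_3\, t^{-4rs}\sm^{-rs}$, $D_2' = Q_2 D_2\, t^{-2rs}\sm^{-rs}$, $D_3' = Q_2(D_3\, t^{-8rs}\sm^{-2rs}+D_1)$, and $D_4' = Q_2(D_2\, t^{-4rs}\sm^{-2rs}+D_0)$. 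The argument then divides into two stages: show that the two Jones coefficients $D_3'$ and $D_4'$ vanish, and then rule out the leftover first‑order relation $0 = D_2' T_n + D_1' T_{n+1}$.

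For the first stage I would run a degree‑in‑$t$ comparison after the substitution $\sm = t^{2n}$, using lowest degrees $\ell$ when $r>4s$ and highest degrees $\hbar$ when $r<-4s$; I describe the case $r>4s$. Each $\ell[D_i']$ is a linear function of $n$ for large $n$, whereas by Lemma~\ref{lemma: C Degree} both $\ell[J_{\sc,n}]$ and $\ell[J_{\sc,n+1}]$ have quadratic leading coefficient $-rs$, while by Lemma~\ref{lemma: E Degree} applied through the definition~(\ref{eq: T_n definition}) of $T_n$, both $\ell[T_n]$ and $\ell[T_{n+1}]$ have quadratic leading coefficient $-4s^2$. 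Because $r>4s$ gives $-rs<-4s^2$, the two $J_{\sc}$ terms reach strictly lower degree than the $T$ terms, so the lowest monomial of the whole sum must come from $D_4'J_{\sc,n}$ and $D_3'J_{\sc,n+1}$ and must cancel between them. If only one of $D_3',D_4'$ were nonzero its lowest monomial would survive, a contradiction; if both are nonzero, cancellation of the lowest monomials for every large $n$ would require $\ell[D_4'] - \ell[D_3'] = \ell[J_{\sc,n+1}] - \ell[J_{\sc,n}]$ for all such $n$.

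The hard part is precisely this last equality, which is where the parity of the degree formula intervenes. The left‑hand side is a single linear function of $n$, but from Lemma~\ref{lemma: C Degree} one computes $\ell[J_{\sc,n+1}]-\ell[J_{\sc,n}] = -rs(2n+1) + (-1)^{n-1}(s-2)(2+r-4s)$, whose constant part depends on the parity of $n$. Since $s>2$ and $r>4s$ give $(s-2)(2+r-4s)\neq 0$, the right‑hand side genuinely depends on parity and cannot equal a parity‑free linear function for both parities. For the parity class where equality fails the lowest monomial survives, a contradiction; hence $D_3' = D_4' = 0$. The case $r<-4s$ is identical after replacing $\ell$ by $\hbar$ and the inequality $-rs<-4s^2$ by $-rs>4s^2$.

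Once $D_3'=D_4'=0$, the relation reduces to $0 = D_2' T_n + D_1' T_{n+1}$. If $D_1'\neq 0$ then $D_2'\neq 0$ as well, since $T_n$ is not the zero function, giving a nontrivial first‑order homogeneous recurrence for $T_n$, which I would contradict via Lemma~\ref{lemma: L>2}. Its three hypotheses are straightforward to verify: $T_n$ satisfies the nontrivial homogeneous recurrence obtained from the inhomogeneous relation $Q(t,\sm,\sl)T_n=B$ of Proposition~\ref{prop: s>2 annihilator}; substituting $J_{\se,-m}=-J_{\se,m}$ into the definition~(\ref{eq: T_n definition}) yields $T_{-n}=T_{n-2}$, so condition~(1) holds with $k=-2$ and $c=1$; and $\hbar[T_n]-\ell[T_n]$ is quadratic with leading coefficient $8s^2$ rather than linear, the hypotheses $r\neq\pm 4s$ ensuring that the two summands of $T_n$ do not cancel at their extreme degrees. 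Lemma~\ref{lemma: L>2} then forces order at least $2$, so $D_1'=D_2'=0$; combined with $D_3'=D_4'=0$ this gives $D_i'=0$ for $i=1,\dots,4$, as claimed.
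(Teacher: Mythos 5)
Your proof is correct and takes essentially the same route as the paper's: a lowest-degree (resp.\ highest-degree when $r<-4s$) comparison in $t$ using $-rs<-4s^2$ and the linearity of $\ell[D_i']$ to force the surviving lowest term into the $J_{\sc}$ part, then the parity-dependent term $(-1)^{n-1}(s-2)(2+r-4s)$ in $\ell[J_{\sc,n+1}]-\ell[J_{\sc,n}]$ to conclude $D_3'=D_4'=0$, and finally Lemma~\ref{lemma: L>2} with $T_{-n}=T_{n-2}$ and quadratic breadth to kill the residual first-order relation. Your preliminary observation that $D_0'=D_4Bt^{-6rs}\sm^{rs}$ with $B\neq 0$ forces $D_4=0$ is a harmless extra simplification that the paper does not use but does not change the argument.
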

\begin{proof}
Suppose $r > 4s$ and $D_4' \neq 0$. As in the $s=2$ case, we compare the lowest degrees in $t$ of the summands. We need another $D_i'$ to be nonzero in order to cancel $D_4' J_{\sc,n}$, so we examine the cases.

Notice that $\ell[T_n] = \min(2r(n+1)+\ell[J_{\se,s(n+1)+1}], -2r(n+1)+\ell[J_{\se,s(n+1)-1}])$, so by Lemma \ref{lemma: E Degree},
$$\ell[T_n] =  -2r(n+1)-4s^2n^2+(10s-8s^2)n-4s^2+10s-4.$$
We know $\ell[J_{\sc,n}]$ has a coefficient of $-rs$ on $n^2$ by Lemma \ref{lemma: C Degree}, and $-rs < -4s^2$, so the lowest degree $D_4' J_{\sc,n}$ cannot be canceled by the lowest degree in $D_2' T_n$ or $D_1' T_{n+1}$, since $\ell[D_i']$ is only linear in $n$ for sufficiently large $n$. Then we must have $D_3' \neq 0$. But this gives us
\begin{align*}
\ell[D_4'] - \ell[D_3'] &= \ell[J_{\sc,n+1}] - \ell[J_{\sc,n}]\\
&= -2rs n -rs +(-1)^n(s-2)(4s-r-2),
\end{align*}
which is eventually linear on the left but alternating on the right, which is impossible. Therefore, $D_4' = 0$. Similarly, $D_3' = 0$, and when $r < -4s$, using the highest degree in $t$ gives $D_4'= D_3' = 0$ as well.

Next, given that $D_4' = D_3' = 0$, we have
$$0 = D_2' T_n + D_1' T_{n+1}.$$
Suppose $D_2' \neq 0$ and $D_1' \neq 0$. Then we have a first order homogeneous recurrence relation for $T_n$. We find a contradiction using Lemma \ref{lemma: L>2}. It is not hard to see that the breadth $\hbar[T_n] - \ell[T_n]$ is quadratic in $n$. Also, notice that
\begin{align*}
T_{-n} &= t^{2r(-n+1)} J_{\se,s(-n+1)+1} - t^{-2r(-n+1)} J_{\se,s(-n+1)-1}\\
&=  - t^{-2r(n-1)} J_{\se,s(n-1)-1} + t^{2r(n-1)} J_{\se,s(n-1)+1}\\
&= T_{n-2},
\end{align*} 
so by Lemma \ref{lemma: L>2}, any homogeneous recurrence relation of $T_n$ has order at least 2, which is a contradiction. Therefore one of $D_2'$ or $D_1'$ is zero, but since $T_{n}$ is not zero, we must have $D_1' = D_2' = 0$, which completes the proof.
\end{proof}
This implies that the recurrence polynomial of $C$ has $\sl$-degree 4.
\begin{lemma}
When $r > 4s$ or $r < -4s$, we have $D_i' = 0$ for $i = 0,\ldots, 4$.
\end{lemma}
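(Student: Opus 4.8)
The plan is to run the proof of Lemma~\ref{lemma: s=2 not degree 3} essentially verbatim, with the sequence $T_n$ of~(\ref{eq: T_n definition}) taking over the role played there by $J_{\se,2n+1}$ and with the peel operator $t^{2rs}\sm^{rs}\sl^2-t^{-2rs}\sm^{-rs}$ of~(\ref{eq: s>2 peel}) replacing $\sm^r\sl+t^{-2r}\sm^{-r}$. First I would establish $D_4'=D_3'=0$ exactly as in the preceding lemma. The only new feature is the inhomogeneous term $D_0'=D_4\,B\,t^{-6rs}\sm^{rs}$, but this is a Laurent polynomial in $t$ and $\sm$ with no $n$-dependence beyond $\sm=t^{2n}$, so its degree in $t$ is linear in $n$, whereas $\ell[J_{\sc,n}]$ and $\ell[T_n]$ are quadratic in $n$ by Lemma~\ref{lemma: C Degree} and the formula for $\ell[T_n]$ recorded above. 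Hence $D_0'$ cannot take part in cancelling the lowest-degree terms (or, when $r<-4s$, the highest-degree terms), and the degree comparison of the preceding lemma forces $D_4'=D_3'=0$ unchanged.

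This leaves $0=D_2'T_n+D_1'T_{n+1}+D_0'$; if $D_0'=0$ the preceding lemma finishes, so suppose $D_0'\neq 0$. I would first note $D_1'\neq 0$, since otherwise $D_2'T_n=-D_0'$ would express $T_n$ as a fixed ratio of Laurent polynomials in $\sm$, contradicting that $\hbar[T_n]-\ell[T_n]$ grows quadratically in $n$. Thus $T_n$ would satisfy a genuine inhomogeneous recurrence of $\sl$-degree $1$, and the whole lemma reduces to showing that $T_n$ admits no such relation (its homogeneous degree-$1$ relations are already excluded by Lemma~\ref{lemma: L>2}, via $T_{-n}=T_{n-2}$). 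Homogenizing as in the $s=2$ case---multiply by $(\sl-1)(D_0')^{-1}$ and clear denominators by $D_0'(t,\sm)D_0'(t,t^2\sm)$---gives a $\sl$-degree-$2$ homogeneous annihilator of $T_n$, and composing on the right with the peel operator, which sends $J_{\sc,n}$ to $T_n$, produces a nonzero $\sl$-degree-$4$ homogeneous annihilator $S(t,\sm,\sl)$ of $J_{\sc,n}$.

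The decisive step is the evaluation at $t=-1$. Writing $D_i'(t,\sm)=(1+t)^{k_i}D_i''(t,\sm)$ with $D_i''(-1,\sm)\neq 0$, I would cancel the common factor $(1+t)^{k_0}$, reduce to $k_0=0$ with at least one of $k_1,k_2$ zero, and split into the cases $k_1=k_2=0$, $k_1=0<k_2$, and $k_1>0=k_2$. The inputs are that the peel operator specializes to $\sm^{rs}\sl^2-\sm^{-rs}=\sm^{-rs}(\sm^{rs}\sl-1)(\sm^{rs}\sl+1)$, and that the annihilator $R(t,\sm,\sl):=(\sl-1)B^{-1}Q(t,\sm,\sl)(t^{2rs}\sm^{rs}\sl^2-t^{-2rs}\sm^{-rs})$ built in Section~\ref{subsec: s>2 annihilator} satisfies, by Proposition~\ref{prop: Q=Res}, $R(-1,\sm,\sl)=C(\sm)(\sl-1)(\sm^{rs}\sl-1)(\sm^{rs}\sl+1)Q(-1,\sm,\sl)$ with $Q(-1,\sm,\sl)$ \emph{irreducible} of $\sl$-degree $2$. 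Since $\alpha_\sc$ generates $\wA_\sc$ and both $S$ and $R$ lie in $\wA_\sc$, passing to $t=-1$ (where $\wT$ becomes commutative) shows that $\alpha_\sc(-1,\sm,\sl)$ divides both $S(-1,\sm,\sl)$ and $R(-1,\sm,\sl)$. In the cases $k_1=0$ a direct computation shows $S(-1,\sm,\sl)$ splits completely into linear factors---the factors $(\sl-1)$ and either $(D_1''(-1,\sm)\sl+D_2''(-1,\sm))$ or $\sl$ coming from the homogenization, together with the two linear factors $(\sm^{rs}\sl\pm 1)$ of the peel operator---so that $Q(-1,\sm,\sl)$ cannot divide $S(-1,\sm,\sl)$; comparing with the factorization of $R(-1,\sm,\sl)$ then forces the extra linear factor of $S(-1,\sm,\sl)$ to divide the irreducible quadratic $Q(-1,\sm,\sl)$, which is absurd. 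In the remaining case $k_1>0=k_2$ the operator $S(-1,\sm,\sl)$ drops to $\sl$-degree $3$, and a degree count against $\deg_\sl R(-1,\sm,\sl)=5$ rules it out. In every case $D_0'\neq 0$ is impossible, so $D_0'=0$ and the preceding lemma completes the proof. I expect this final step to be the main obstacle: carrying the $(1+t)$-valuations correctly through the homogenization, and---more seriously---pinning down the precise divisibility relation between $S(-1,\sm,\sl)$ and $R(-1,\sm,\sl)$ so that the clash with the irreducibility of $Q(-1,\sm,\sl)$ becomes a genuine contradiction rather than a mere matching of degrees.
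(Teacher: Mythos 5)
Your proposal is correct and takes essentially the same route as the paper: the paper's own proof of this lemma just establishes $D_4'=D_3'=0$ as in the preceding lemma (noting that $\ell[D_0']$ is linear in $n$) and then declares the remaining degree-one inhomogeneous relation for $T_n$ to be handled ``analogously'' to Lemma \ref{lemma: s=2 not degree 3}. Your write-up is exactly that analogy carried out in detail---homogenization by $(\sl-1)(D_0')^{-1}$, composition with the peel operator $t^{2rs}\sm^{rs}\sl^2-t^{-2rs}\sm^{-rs}$, the $(1+t)^{k_i}$ case analysis at $t=-1$, and the contradiction with the irreducibility of $Q(-1,\sm,\sl)$ supplied by Proposition \ref{prop: Q=Res}, including the degree count ($\deg_\sl S(-1,\sm,\sl)=3$ against $\deg_\sl R(-1,\sm,\sl)=5$) in the case $k_1>0=k_2$.
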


\begin{proof}
Noting that $\ell[D_0']$ is linear in $n$, the proof that $ D_4' = D_3' = 0$ is the same.

Since $D_4' = D_3' = 0$, we have
$$0 = D_2' T_n + D_1' T_{n+1} + D_0'.$$
This is a degree 1 inhomogeneous recurrence relation for $T_n$. The rest of the proof is analagous to the proof of Lemma \ref{lemma: s=2 not degree 3} in the $s=2$ case.
\end{proof}
We conclude that
$$(\sl-1)B(t,\sm)^{-1} Q(t,\sm,\sl)(t^{2rs}\sm^{rs} \sl^2-t^{-2rs}\sm^{-rs})$$
is the recurrence polynomial of the $(r,s)$-cabled knot $C$ over the figure eight knot if $s > 2$ and $r > 4s$ or $r < -4s$ up to a factor of an element in $\complex(t,\sm)$.

\end{document}